% ----------------------------------------------------------------
% AMS-LaTeX Paper ************************************************
% **** -----------------------------------------------------------
\documentclass[10pt]{amsart}
\usepackage{amssymb} % SRC Specials: DVI [Inverse] Search
\textwidth=16cm \topmargin=-1cm \oddsidemargin=\evensidemargin
\oddsidemargin=-1cm \evensidemargin=-1cm
 \textheight=24cm

% ----------------------------------------------------------------
\vfuzz2pt % Don't report over-full v-boxes if over-edge is small
\hfuzz2pt % Don't report over-full h-boxes if over-edge is small
% THEOREMS -------------------------------------------------------
\newtheorem{thm}{Theorem}[section]

\newtheorem{lem}[thm]{Lemma}
\newtheorem{prop}[thm]{Proposition}
\theoremstyle{definition}

\newtheorem{rem}[thm]{Remark}
\newtheorem{prob}[thm]{Problem}
\numberwithin{equation}{section}
% MATH -----------------------------------------------------------

% ----------------------------------------------------------------
\begin{document}
\title[minimal blocking sets and covering groups by subgroups]{Minimal blocking sets in $PG(n,2)$ and covering groups by subgroups}%
\author[A. Abdollahi, M. J. Ataei, A. Mohammadi Hassanabadi]{A. Abdollahi$^*$, M. J. Ataei, A. Mohammadi Hassanabadi\\
\sf {Department of Mathematics\\ University of Isfahan\\ Isfahan
81746-73441\\ Iran.}}
\thanks{This work was supported by the Excellence Center of
University of Isfahan for Mathematics. }
\subjclass{Mathematics Subject Classification:  51E21, 20D60}%
 \keywords{Blocking sets; projective spaces;  maximal irredundant
covers for groups; covering groups by subgroups}%
\thanks{$^*$Corresponding author: {\tt a.abdollahi@math.ui.ac.ir}}
\thanks{e-mail: \tt  mj.ataey@sci.ui.ac.ir{,}
aamohaha@yahoo.com}
%\dedicatory{}%
%\commby{}%
% ----------------------------------------------------------------
\begin{abstract}
In this paper we prove that  a set of points $B$ of $PG(n,2)$ is a
minimal blocking set if and only if $\langle B\rangle=PG(d,2)$
with $d$ odd and $B$ is a set of $d+2$ points of $PG(d,2)$ no
$d+1$ of them in the same hyperplane. As a corollary to the latter
result we show that if $G$ is a finite  $2$-group and  $n$ is a
positive integer, then $G$ admits a $\mathfrak{C}_{n+1}$-cover if
and only if $n$ is even and $G\cong (C_2)^{n}$, where by a
$\mathfrak{C}_m$-cover for a group $H$ we mean  a set
$\mathcal{C}$ of size $m$ of maximal subgroups of $H$ whose
set-theoretic union is the whole $H$ and no proper subset of
$\mathcal{C}$ has the latter property and the intersection of the
maximal subgroups is core-free.  Also for all $n<10$ we find all
pairs $(m,p)$ ($m>0$ an integer and $p$ a prime number) for which
there is a blocking set $B$ of size $n$ in $PG(m,p)$ such that
$\langle B\rangle=PG(m,p)$.
\end{abstract} \maketitle
% ----------------------------------------------------------------
\section{\bf Introduction and results}
 Let  $G$ be a group. A set $\mathcal{C}$ of proper subgroups
of $G$ is called a cover for $G$ if its set-theoretic union is
equal to $G$. If the size of $\mathcal{C}$ is $n$, we call
$\mathcal{C}$ an $n$-cover for the group $G$. A cover
$\mathcal{C}$ for a group $G$  is called irredundant if no proper
subset of $\mathcal{C}$ is  a cover for $G$. A cover
$\mathcal{C}$ for a group $G$ is called core-free if the
intersection $D=\bigcap_{M\in\mathcal{C}}M$ of $\mathcal{C}$ is
core-free in $G$, i.e. $D_G=\bigcap_{g\in G} g^{-1}Dg$ is the
trivial subgroup of $G$. A cover $\mathcal{C}$ for a group $G$ is
called maximal if all the members of $\mathcal{C}$ are maximal
subgroups of $G$. A cover $\mathcal{C}$ for a group $G$ is called
a $\mathfrak{C}_n$-cover whenever $\mathcal{C}$ is an irredundant
maximal core-free $n$-cover for $G$ and in this case we say that
$G$ is a $\mathfrak{C}_n$-group.

Let $n$ be a positive integer. Denote by $PG(n,q)$  the
$n$-dimensional projective space over the finite field
$\mathbb{F}_q$ of order $q$. A blocking set  in $PG(n,q)$ is a set
of points that has nonempty intersection with every hyperplane of
$PG(n,q)$. A blocking set that contains a line is called trivial.
A blocking set  is called minimal if none of its proper subsets
are  blocking sets. For a blocking set $B$ in $PG(n,q)$ we denote
by $d(B)$ the least positive integer $d$ such that $B$ is
contained in a $d$-dimensional subspace of $PG(n,q)$. Thus
  $d(B)$ is equal to the (projective) dimension of the subspace spanned by $B$ in $PG(n,q)$.

For further studies in the topic of blocking sets see Chapter $13$
of the second edition of Hirschfeld's book \cite{HIR} and also see
\cite{Storme}.

The problem of covering a finite group with subgroups of a
specified order has been studied in \cite{paking} where also
bounds on the size of such covers was found. From Proposition 2.5
of \cite{paking} which is proved by  a deep theorem due to
Blokhuis \cite{blokhuis},  the following result easily follows.
We will require this as an auxiliary tool later.
\begin{thm}{\rm (See Proposition 2.5 of \cite{paking})} \label{paking} Let $p$ be a prime and let $G$ be a
finite $p$-group with a maximal irredundant $n$-cover. Then either
$n\geq \frac{3(p+1)}{2}$ or $n=p+1$.
\end{thm}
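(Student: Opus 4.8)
The plan is to translate the group-theoretic statement into the language of blocking sets, where Blokhuis' theorem can be brought to bear. First I would observe that in a finite $p$-group $G$ every maximal subgroup has index $p$ and hence contains the Frattini subgroup $\Phi(G)$; thus the maximal subgroups of $G$ are in bijection with the maximal subgroups of the elementary abelian quotient $W:=G/\Phi(G)\cong \mathbb{F}_p^{\,e}$, that is, with the hyperplanes of the $e$-dimensional $\mathbb{F}_p$-space $W$. A family $M_1,\dots,M_n$ of maximal subgroups covers $G$ if and only if their images cover $W$, and the cover is irredundant exactly when none of these hyperplanes may be deleted. Writing each hyperplane as $\ker\varphi_i$ for a functional $\varphi_i$ and using the point--hyperplane duality of $PG(e-1,p)$, a covering of $W$ by the hyperplanes $\ker\varphi_1,\dots,\ker\varphi_n$ corresponds precisely to the set of points $B=\{[\varphi_1],\dots,[\varphi_n]\}$ meeting every hyperplane of the dual space $PG(e-1,p)$; that is, $B$ is a blocking set, and deleting $M_i$ leaves a cover iff deleting $[\varphi_i]$ leaves a blocking set, so irredundancy of the cover corresponds to minimality of $B$. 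Hence $n=|B|$ for a minimal blocking set $B$ of $PG(e-1,p)$ (note $e\ge 2$, since a group admitting such a cover is non-cyclic).

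Next I would pass to the subspace actually spanned by $B$, so that $B$ is a minimal blocking set of $PG(d,p)$ with $d=d(B)=\dim\langle B\rangle$. If $d=1$, then $\langle B\rangle$ is a projective line, whose hyperplanes are its $p+1$ points; a blocking set must contain all of them, so $B$ is the whole line and $n=p+1$, the first alternative. If instead $d\ge 2$, I would first rule out that $B$ contains a line: in $PG(d,p)$ any line meets every hyperplane, the intersection having projective dimension at least $1+(d-1)-d=0$, so a line is itself a blocking set, and minimality of $B$ would then force $B$ to equal that line, contradicting $d\ge 2$. Thus for $d\ge 2$ the set $B$ is a nontrivial (line-free) blocking set of $PG(d,p)$.

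At this point I would invoke Blokhuis' theorem. For $p$ prime a nontrivial blocking set of the plane $PG(2,p)$ has at least $\tfrac{3(p+1)}{2}$ points, and the same lower bound holds for nontrivial blocking sets of $PG(d,p)$ for every $d\ge 2$ by reducing to the planar case (see Chapter $13$ of \cite{HIR} and \cite{Storme}). Applying this to the line-free blocking set $B$ gives $n=|B|\ge\tfrac{3(p+1)}{2}$, the second alternative, and the two cases together prove the theorem. The routine part is the dictionary between maximal covers and blocking sets, which is just linear duality once maximal subgroups are identified with the hyperplanes of $G/\Phi(G)$; the hard part will be the quantitative input, namely Blokhuis' planar bound together with the dimension reduction that makes $\tfrac{3(p+1)}{2}$ available in all dimensions $d\ge 2$, and it is precisely here that the primality of $p$ is essential.
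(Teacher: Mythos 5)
Your argument is correct and is essentially the route the paper itself takes: the paper gives no proof of its own but states the result as a direct quotation of Proposition 2.5 of \cite{paking}, whose proof is exactly your dictionary (maximal irredundant covers of a $p$-group $\leftrightarrow$ minimal blocking sets of $PG(d,p)$, with $d=1$ yielding $n=p+1$ and $d\ge 2$ yielding a line-free blocking set) followed by Blokhuis' planar bound. The one step you flag but do not prove --- that a nontrivial blocking set of $PG(d,p)$ with $d\ge 2$ still has at least $\tfrac{3(p+1)}{2}$ points, which requires Heim's nontrivial reduction to the planar case rather than a routine projection --- is likewise outsourced to the literature in the cited source, so your proposal is at the same level of completeness as the paper's treatment.
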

In section $2$ we give  relations between non-trivial minimal
blocking sets of size $n$ and $\mathfrak{C}_n$-groups. Also we
give a complete characterization of  minimal blocking sets in
$PG(n,2)$.

 In order to characterize
all $\mathfrak{C}_n$-groups, we first need  to know the structure
of an elementary abelian  $\mathfrak{C}_n$-group. This is
equivalent to find pairs $(m,p)$ $(m\in \mathbb{N}$ and  $p$ is a
prime number) for which $PG(m,p)$ contains a non-trivial  minimal
blocking set of size $n$ (See Propositions \ref{b1}, \ref{p3} and
\ref{p12}).

Nontrivial minimal blocking sets in $PG(2,p)$ of size
$\frac{3(p+1)}{2}$ exist for all odd primes $p$. Indeed, an
example is given by the projective triangle: the set consisting
of the points $(0,1,-s^2)$, $(1,-s^2,0)$, $(-s^2,0,1)$ with $s\in
\mathbb{F}_p$.

 In \cite{GS}, the
smallest non-trivial blocking sets of $PG(n,2)$ $(n\geq 3$) with
respect to $t$-spaces ($1\leq t\leq n-1$) are classified. A
complete classification of minimal blocking sets seems to be
impossible, however in this paper we determine all minimal
blocking sets  in $PG(n,2)$, namely we prove
\begin{thm}\label{thm2} A set of points $B$ of $PG(n,2)$ is a minimal blocking set if and only if
$\langle B\rangle=PG(d,2)$ with $d$ odd and $B$ is a set of $d+2$
points of $PG(d,2)$ no $d+1$ of them in the same hyperplane.
\end{thm}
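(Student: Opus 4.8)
The plan is to recast both conditions as linear algebra over $\mathbb{F}_2$ and read everything off a single incidence matrix. First I would reduce to the span: writing $W=\langle B\rangle$ for the subspace of projective dimension $d=d(B)$, I claim that for \emph{any} subset $S\subseteq W$, $S$ is a blocking set of $PG(n,2)$ if and only if it is a blocking set of $W\cong PG(d,2)$. Indeed every hyperplane $H'$ of $W$ is the trace $H\cap W$ of some hyperplane $H$ of $PG(n,2)$: in vector-space terms, pick a complement $U$ of $\widehat W$ in $\mathbb{F}_2^{n+1}$ and put $\widehat H=\widehat{H'}\oplus U$; conversely a hyperplane of $PG(n,2)$ meets $W$ either in all of $W$ (and then contains $S$) or in a hyperplane of $W$. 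Since the statement holds for every subset, both the blocking property and minimality transfer intact, so I may assume $B=\{v_1,\dots,v_k\}$ is a set of distinct nonzero vectors spanning $\widehat W=\mathbb{F}_2^{d+1}$.

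Next I set up the dictionary. Points of $PG(d,2)$ are the nonzero vectors and hyperplanes are the kernels $\ker w$ of nonzero functionals $w\in\mathbb{F}_2^{d+1}$, with $v\in\ker w$ precisely when $v\cdot w=0$. Let $M$ be the $k\times(d+1)$ matrix with rows $v_1,\dots,v_k$ and let $C=\{Mw:w\in\mathbb{F}_2^{d+1}\}$ be its column space, a subspace of $\mathbb{F}_2^k$ of dimension $\operatorname{rank}M=d+1$ since the $v_i$ span. A hyperplane $\ker w$ misses $B$ exactly when $v_i\cdot w=1$ for all $i$, i.e. $Mw=\mathbf 1$; hence $B$ is a blocking set if and only if $\mathbf 1\notin C$. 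Similarly $B\setminus\{v_i\}$ fails to be blocking if and only if some $w$ satisfies $v_j\cdot w=1$ for all $j\neq i$, and since $B$ is blocking such a $w$ forces $v_i\cdot w=0$; writing $e_i$ for the $i$-th standard basis vector of $\mathbb{F}_2^k$, this reads $\mathbf 1+e_i\in C$. Therefore $B$ is a minimal blocking set if and only if $\mathbf 1\notin C$ and $\mathbf 1+e_i\in C$ for every $i$.

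For the forward direction, from $\mathbf 1+e_i,\mathbf 1+e_j\in C$ I get $e_i+e_j\in C$ for all $i,j$, and these span the even-weight code $E=\{x:\sum_i x_i=0\}$ of dimension $k-1$, so $E\subseteq C$. Since $E$ is a hyperplane of $\mathbb{F}_2^k$ and $\mathbf 1\notin C$, this forces $C=E$, whence $d+1=\dim C=k-1$, i.e. $|B|=k=d+2$; moreover $\mathbf 1\notin E$ means $k$ is odd, so $d$ is odd. Finally $C=E$ gives $\mathbf 1^{\mathsf T}M=0$, i.e. $\sum_i v_i=0$; as the $v_i$ span $\mathbb{F}_2^{d+1}$ their space of dependencies is one-dimensional, so this full-support relation is the only one and every $d+1$ of the $v_i$ are independent, i.e. no $d+1$ of the points lie in a common hyperplane. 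The converse runs the same computation backwards: if $B$ consists of $d+2$ spanning points with no $d+1$ in a hyperplane, then the unique dependency has full support and equals $\sum_i v_i=0$, so $C\subseteq E$ and by dimension $C=E$; with $d$ odd one has $\mathbf 1\notin E=C$ (so $B$ blocks), while each $\mathbf 1+e_i$ has even weight $d+1$ and hence lies in $E=C$ (so $B$ is minimal).

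The step I expect to be most delicate is the bookkeeping linking the algebraic relation $\sum_i v_i=0$ to the geometric phrase ``no $d+1$ points in the same hyperplane'' — this rests on the fact that spanning makes the dependency space one-dimensional, so the full-support relation is the unique one — together with the parity argument that isolates $d$ odd. The reduction to the span also has to be phrased so that minimality genuinely transfers between $PG(n,2)$ and $PG(d,2)$; once these are pinned down, the remainder is the routine identification $C=E$.
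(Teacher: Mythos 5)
Your proof is correct. It shares the paper's broad strategy---encode $B$ as a matrix over $\mathbb{F}_2$ and let the all-ones vector together with a parity count do the work---but the execution is genuinely different. The paper first reduces, via its normal-form result (Theorem \ref{thm1}, the coding-theoretic standard form), to a blocking matrix $\left[\begin{smallmatrix} I_{d+1}\\ L\end{smallmatrix}\right]$, then observes that for the identity block the witness columns $X_i$ are forced to be $\mathbf{1}+e_i$, which pins every row of $L$ down to the all-ones row; the conclusions $|B|=d+2$ and $d$ odd then fall out of $\mathbf{1}\cdot\mathbf{1}=d+1=0$. You never normalize: you characterize minimality intrinsically by ``$\mathbf{1}\notin C$ and $\mathbf{1}+e_i\in C$ for all $i$,'' where $C$ is the column space of the blocking matrix, and identify $C$ with the even-weight code $E$, from which $|B|=\dim E+1=d+2$ and the parity of $d$ follow simultaneously. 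Your route buys two things: it dispenses with the normal-form lemma and the equivalence relation on blocking matrices, and it proves the theorem in exactly the form stated, since the identification of the unique dependency $\sum_i v_i=0$ (one-dimensional left kernel) with ``no $d+1$ points in a common hyperplane'' is made explicit---a translation the paper's proof leaves implicit, as it establishes a reformulation in terms of the canonical matrix $\left[\begin{smallmatrix} I_{d+1}\\ \mathbf{1}\end{smallmatrix}\right]$. The paper's version, in exchange, exhibits that canonical matrix explicitly, which is convenient for producing examples later. The only points worth an extra sentence in a polished write-up are the harmless edge case $|B|=1$ (needed so that $\mathbf{1}+e_i\neq 0$ guarantees a genuine hyperplane) and the one-line verification that a dependency supported on at most $d+1$ indices already puts some $d+1$ of the points in a common hyperplane; neither affects the argument.
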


For positive integers $n$ and $m$,  we denote the direct product
of $m$ copies of the cyclic group $C_n$ of order $n$  by
$(C_n)^m$.

Using Theorem \ref{thm2} we characterize all finite 2-groups
having a $\mathfrak{C}_{n+1}$-cover as follows:
\begin{thm}\label{application}
Let $G$ be a finite  $2$-group and let $n$ be a positive integer.
Then $G$ admits a $\mathfrak{C}_{n+1}$-cover if and only if $n$ is
even and $G\cong (C_2)^{n}$.
\end{thm}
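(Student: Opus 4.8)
The plan is to prove both implications by reducing the group-theoretic statement to Theorem \ref{thm2} via the dictionary between maximal covers of elementary abelian $2$-groups and blocking sets in projective space set up in Section $2$ (Propositions \ref{b1}, \ref{p3}, \ref{p12}). The first step is to show that the core-free hypothesis forces $G$ to be elementary abelian. Indeed, if $\mathcal{C}$ is a $\mathfrak{C}_{n+1}$-cover of the $2$-group $G$, then every member of $\mathcal{C}$ is a maximal subgroup and hence contains the Frattini subgroup $\Phi(G)$; consequently $\Phi(G)\subseteq D:=\bigcap_{M\in\mathcal{C}}M$. Since $\Phi(G)$ is characteristic, it lies in the core $D_G$, and the core-free condition $D_G=1$ gives $\Phi(G)=1$. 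A $2$-group with trivial Frattini subgroup is elementary abelian, so $G\cong (C_2)^m$ for some $m\geq 1$.

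Next I would translate the cover into a blocking set. Writing $G=(C_2)^m$ as the additive group of $\mathbb{F}_2^{m}$, its maximal subgroups are exactly the hyperplanes of $PG(m-1,2)$, and by self-duality a set $\mathcal{C}$ of maximal subgroups corresponds to a set $B$ of points of $PG(m-1,2)$. Under this correspondence, $\mathcal{C}$ being a cover translates to $B$ being a blocking set, irredundancy of $\mathcal{C}$ translates to minimality of $B$, and the condition $\bigcap_{M\in\mathcal{C}}M=1$ translates to $\langle B\rangle=PG(m-1,2)$ (the defining functionals of the chosen hyperplanes span the dual space precisely when the dual points span). Thus a $\mathfrak{C}_{n+1}$-cover of $(C_2)^m$ exists if and only if $PG(m-1,2)$ carries a minimal blocking set $B$ of size $n+1$ with $\langle B\rangle=PG(m-1,2)$.

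Applying Theorem \ref{thm2} with $d=m-1$ then settles the forward direction: a spanning minimal blocking set of $PG(m-1,2)$ forces $m-1$ to be odd and $|B|=(m-1)+2=m+1$. Hence $n+1=m+1$, so $n=m$ and $d=n-1$ is odd; that is, $n$ is even and $G\cong (C_2)^{n}$. For the converse, assume $n$ is even and set $d=n-1$, which is odd. I would exhibit an explicit spanning minimal blocking set of the required size: take the $d+2=n+1$ points of $PG(d,2)$ corresponding to the standard basis vectors $e_1,\dots,e_{d+1}$ together with $e_1+\cdots+e_{d+1}$. Any $d+1$ of these vectors form a basis of $\mathbb{F}_2^{d+1}$, so no $d+1$ of the points lie in a common hyperplane; by the sufficiency part of Theorem \ref{thm2} (using that $d$ is odd) this set is a minimal blocking set spanning $PG(d,2)$. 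Dualizing it back through the correspondence above produces a $\mathfrak{C}_{n+1}$-cover of $(C_2)^{d+1}=(C_2)^{n}$.

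The main obstacle I anticipate is not the final counting, which is routine once Theorem \ref{thm2} is available, but rather setting up the duality cleanly, in particular verifying that core-freeness corresponds exactly to the spanning condition $\langle B\rangle=PG(m-1,2)$ and that irredundancy corresponds exactly to minimality, and the reduction step, namely arguing that core-freeness of a maximal cover of a $2$-group is strong enough to kill the Frattini subgroup and hence force the group to be elementary abelian.
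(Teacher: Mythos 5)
Your proposal is correct and follows essentially the same route as the paper: the Frattini-subgroup argument and the duality between maximal core-free irredundant covers of $(C_2)^m$ and spanning minimal blocking sets of $PG(m-1,2)$ are exactly the content of Proposition \ref{p12} (via Propositions \ref{p3} and \ref{p2} and Remark \ref{rem1}), and the paper then concludes, as you do, by invoking Theorem \ref{thm2}. The only difference is cosmetic: you re-derive Proposition \ref{p12} inline and exhibit the explicit witness $\{e_1,\dots,e_{d+1},e_1+\cdots+e_{d+1}\}$ for the converse, whereas the paper cites the proposition and the theorem directly.
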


 Groups with a $\mathfrak{C}_n$-cover for
$n=3,4,5$ and $6$ are characterized without appealing to the
theory of blocking sets; see \cite{scorza}, \cite{greco1},
\cite{BFS97} and \cite{AAJM}, respectively.

 In section $3$ we give some results
on $p$-groups ($p$ prime)  satisfying the property
$\mathfrak{C}_n$ for some positive integer $n$.

In section $4$ we characterize elementary abelian
$\mathfrak{C}_n$-groups for $n\in\{7,8,9\}$ as follows:
\begin{thm}\label{p-group7} Let $G$ be a $\mathfrak{C}_7$-group. Then $G$ is a $p$-group
for a prime number $p$  if and only if $G\cong (C_{2})^6$ or $
(C_{3})^4$.
\end{thm}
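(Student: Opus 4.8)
The plan is to pass from groups to blocking sets, cut the prime down with Theorem~\ref{paking}, and then settle $p=2$ and $p=3$ separately. First I would reduce to the elementary abelian case. If the $p$-group $G$ has a $\mathfrak{C}_7$-cover $\{M_1,\dots,M_7\}$, each $M_i$ is maximal in a $p$-group, hence normal of index $p$ and containing $\Phi(G)$; so $D=\bigcap_i M_i$ is normal and contains $\Phi(G)$, and core-freeness forces $D=1$, whence $\Phi(G)=1$ and $G\cong(C_p)^{m+1}$. Writing $G=\mathbb{F}_p^{m+1}$, the $M_i$ are hyperplanes, i.e.\ points of the dual space, and $\bigcup_i M_i=G$ says exactly that these points meet every hyperplane; thus they form a blocking set $B$ of size $7$ in $PG(m,p)^*\cong PG(m,p)$. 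Irredundancy is minimality of $B$ and core-freeness is $\langle B\rangle=PG(m,p)$, so $(C_p)^{m+1}$ is a $\mathfrak{C}_7$-group if and only if $PG(m,p)$ has a spanning minimal blocking set of size $7$.

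Since a $\mathfrak{C}_7$-cover is a maximal irredundant $7$-cover, Theorem~\ref{paking} gives $7=p+1$ or $7\ge\tfrac{3(p+1)}{2}$; the first has no prime solution and the second forces $p\le 3$, so $p\in\{2,3\}$. For $p=2$ this is a $\mathfrak{C}_{6+1}$-cover with $n=6$ even, so Theorem~\ref{application} gives $G\cong(C_2)^6$, and conversely that group is a $\mathfrak{C}_7$-group by the same theorem (or by Theorem~\ref{thm2} with $d=5$, since $5+2=7$). It remains to prove that for $p=3$ the only admissible space is $PG(3,3)$, giving $G\cong(C_3)^4$.

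For $p=3$ I would first note that spanning by $7$ points forces $m\le 6$, while $PG(1,3)$ has only $4$ points, so $2\le m\le 6$. The dimension $m=3$ occurs: the set $\{e_1,e_2,e_3,e_4,\,e_1+e_2,\,e_3+e_4,\,e_1+2e_2+e_3+2e_4\}$ spans $PG(3,3)$, meets every plane (the four unit points block every plane with a zero coordinate, and a short check shows the remaining three points block all eight fully supported planes), and is minimal, since each of its points lies on a plane meeting $B$ only there; hence $(C_3)^4$ is a $\mathfrak{C}_7$-group.

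To exclude the other dimensions I would work in a basis in which $m+1$ of the points are the unit points $e_i$; these block every plane except the fully supported ones, whose coefficient vectors lie in $\{1,2\}^{m+1}$, and a direct count shows that a unit point blocks none of the $2^{m}$ fully supported planes, while a point of weight $2$ blocks exactly $2^{m-1}$ of them and no point blocks more (the number of $\{1,2\}$-solutions of a single nonzero $\mathbb{F}_3$-form is at most $2^{w-1}$ in $w$ variables, with equality only at $w=2$). For $m=6$ there are no spare points, so all $2^{6}$ such planes are missed; for $m=5$ one spare point blocks at most $2^{4}$ of the $2^{5}$ of them; and for $m=2$ a count on the seven tangent lines (any two of which meet in one of the six complementary points, $k_Y$ of them passing through $Y$) gives $\sum_Y k_Y=21$ and $\sum_Y\binom{k_Y}{2}=21$, hence $\sum_Y k_Y^2=63<441/6$, contradicting Cauchy--Schwarz, so $PG(2,3)$ has no minimal blocking set of size $7$. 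The decisive case is $m=4$, where the two spare points must cover all $16$ fully supported planes, so each blocks exactly $8$ and is therefore of weight $2$, and two weight-$2$ points cover all $16$ only when they share the support $\{i,j\}$ of a single line $\langle e_i,e_j\rangle$; but then $e_i,e_j$ and the two spares are the four points of that line, so $B$ contains a line and cannot be minimal. Thus $m=4$ is impossible and $m=3$ is the only surviving dimension. The hard part is exactly this $PG(4,3)$ step: there blocking sets of size $7$ do exist, and the work is to show that the blocking requirement on the two fully supported points forces a line into $B$ and so destroys minimality.
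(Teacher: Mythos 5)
Your proposal is correct, and while its skeleton matches the paper's (reduce to elementary abelian via Proposition~\ref{p12}, use Theorem~\ref{paking} to force $p\in\{2,3\}$, dispatch $p=2$ by Theorem~\ref{application}), the heart of the argument --- the $p=3$ case --- is handled by a genuinely different route. The paper's Lemma~\ref{81} first bounds $|G|\leq 3^5$ group-theoretically via Lemma~\ref{p}(b) (which rests on Tomkinson's lemma), excludes $|G|=3^3$ by citing the Bruen--Thas bound (Remark~\ref{remi}), exhibits a {\sf GAP}-verified cover for $(C_3)^4$, and kills $|G|=3^5$ by pinning down every intersection size $\lvert\bigcap_{i\in S}M_i\rvert$ and deriving $\lvert\bigcup_i M_i\rvert=225\neq 243$ from inclusion--exclusion. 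You instead normalize $m+1$ of the seven points to the standard basis (Theorem~\ref{thm1}) and count how many of the $2^m$ fully supported hyperplanes a point of weight $w$ can block --- at most $2^{m-1}$, with equality only for $w=2$; this trivializes $m=5,6$, and for $m=4$ forces the two extra points to be the two remaining points of a line $\langle e_i,e_j\rangle$, so $B$ contains a line and is not minimal; for $m=2$ your tangent-line double count ($\sum k_Y=21$, $\sum\binom{k_Y}{2}=21$, so $\sum k_Y^2=63<441/6$) is essentially a self-contained proof of the case of Bruen--Thas that the paper only cites. What your approach buys is independence from {\sf GAP}, from Tomkinson's lemma, and from the cited blocking-set bounds, plus a structural explanation (a forced line) for why $PG(4,3)$ fails rather than a numerical contradiction; what the paper's approach buys is uniformity, since the same intersection-size/inclusion--exclusion machinery is reused verbatim for the $\mathfrak{C}_8$- and $\mathfrak{C}_9$-cases. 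The only places where your write-up compresses finite verifications --- minimality of the $PG(3,3)$ example, and the three-case check that two weight-$2$ points cover all $16$ fully supported hyperplanes only when they share a support --- are routine and check out.
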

\begin{thm}\label{p-group8} Let $G$ be a $\mathfrak{C}_8$-group. Then $G$ is a $p$-group
for a prime number $p$  if and only if  $G\cong (C_{3})^4$ or
$(C_{7})^2$.
\end{thm}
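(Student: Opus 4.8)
My plan is to mirror the argument for Theorem \ref{p-group7}, reducing the group-theoretic question to the existence of a suitable blocking set and then invoking Theorems \ref{paking} and \ref{thm2}. I begin with the (easy) sufficiency. If $G\cong(C_7)^2$, the eight subgroups of order $7$ meet pairwise only in the identity, so they partition the $48$ nonidentity elements into $8$ classes of size $6$; this cover is irredundant, maximal, and core-free, hence a $\mathfrak{C}_8$-cover. If $G\cong(C_3)^4$, the $\mathfrak{C}_8$-cover is obtained from a minimal blocking set of size $8$ spanning $PG(3,3)$, whose existence is supplied by the constructions behind Propositions \ref{b1}, \ref{p3} and \ref{p12}.

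For the necessity, let $G$ be a $p$-group carrying a $\mathfrak{C}_8$-cover $\mathcal C$, and put $D=\bigcap_{M\in\mathcal C}M$. Since $G$ is nilpotent, each $M\in\mathcal C$ is normal of index $p$ and contains the Frattini subgroup, so $D$ is normal and $D\supseteq\Phi(G)$; core-freeness forces $D=D_G=1$, hence $\Phi(G)=1$ and $G\cong(C_p)^{m+1}$ for some $m\geq 1$. By the dictionary of Section~2, a $\mathfrak{C}_8$-cover of $(C_p)^{m+1}$ corresponds, under projective duality, to a minimal blocking set of size $8$ spanning $PG(m,p)$. Applying Theorem \ref{paking} with $n=8$, I get either $8\geq\frac{3(p+1)}{2}$, which forces $p\in\{2,3\}$, or $8=p+1$, which forces $p=7$; in particular $p\in\{2,3,7\}$.

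I then treat the two outer primes. For $p=2$ a parity obstruction settles the matter: by Theorem \ref{thm2} every minimal blocking set of $PG(m,2)$ has $d+2$ points with $d$ odd, hence odd cardinality, so size $8$ is impossible and no elementary abelian $2$-group occurs. For $p=7$ the strict inequality $8<\frac{3\cdot 8}{2}=12$ rules out the first branch of Theorem \ref{paking}, leaving only $8=p+1$; this is exactly the partition case, where the blocking set must be all of $PG(1,7)$, giving $m=1$ and $G\cong(C_7)^2$.

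The remaining case $p=3$ is where the real work lies, and it is the step I expect to be the main obstacle, since here neither an explicit characterization like Theorem \ref{thm2} nor a numerical coincidence as for $p=7$ is available. What is needed is the exact set of dimensions $m$ for which $PG(m,3)$ carries a non-trivial minimal blocking set of size $8$ spanning the space, and I would extract this from the size-$8$ entry of the classification of spanning minimal blocking sets for $n<10$ (Propositions \ref{p3} and \ref{p12}). The values $m=1$ and $m=2$ are excluded because the spanning minimal blocking sets there have sizes $4$ and $6$ (in $PG(2,3)$ the classical bounds leave only the projective triangle of size $6$), while the classification shows that for $m\geq 4$ no spanning minimal blocking set of $PG(m,3)$ is as small as $8$ points; the unique surviving dimension is $m=3$, which yields $G\cong(C_3)^4$. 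Assembling the three cases gives $G\cong(C_3)^4$ or $(C_7)^2$, as claimed.
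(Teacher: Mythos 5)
Your reduction to elementary abelian groups, the application of Theorem \ref{paking} to get $p\in\{2,3,7\}$, the parity exclusion of $p=2$ via Theorem \ref{thm2}, and the identification of $p=7$ with the partition case $8=p+1$ (giving $(C_7)^2$) all match the paper's route, which handles these steps via Theorem \ref{application} and Lemma \ref{p}(c). The problem is the $p=3$ case, which you correctly identify as ``where the real work lies'' but then resolve by appealing to ``the size-$8$ entry of the classification of spanning minimal blocking sets for $n<10$.'' No such classification is available to cite: Propositions \ref{p3} and \ref{p12} are only the dictionary between irredundant covers and minimal blocking sets, and the paper's own table lists Theorem \ref{p-group8} itself as the reference for the $n=8$ entry. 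So your argument is circular at exactly the decisive step. Concretely, after Lemma \ref{p}(b) bounds $|G|\leq 3^6$ and Remark \ref{remi} (the Bruen--Thas bound $q\sqrt q+1$) kills $PG(2,3)$, the dimensions $m=4$ and $m=5$ are \emph{not} excluded by any a priori bound (a spanning minimal blocking set of $PG(m,3)$ need only have at least $m+2$ points), and ruling them out is the substance of the proof: the paper devotes Lemma \ref{C35} to $(C_3)^5$ (a reduction via Theorem \ref{thm1} to finitely many candidate eighth rows of a blocking matrix, finished by a {\sf GAP} search), and disposes of $(C_3)^6$ by an inclusion--exclusion count on the intersection lattice of the eight maximal subgroups, which yields the impossible value $|G|=705$.

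A secondary, smaller gap is in the sufficiency direction for $(C_3)^4$: you assert that a $\mathfrak{C}_8$-cover ``is obtained from a minimal blocking set of size $8$ spanning $PG(3,3)$, whose existence is supplied by the constructions behind Propositions \ref{b1}, \ref{p3} and \ref{p12},'' but those propositions supply no construction; an explicit object must be exhibited (the paper writes down eight maximal subgroups of $\langle a,b,c,d\rangle\cong(C_3)^4$ in Lemma \ref{cover81}, equivalently the size-$8$ blocking set of Theorem \ref{minimal}(b)). Your $(C_7)^2$ partition argument is fine. To repair the proof you need, at minimum: an explicit $\mathfrak{C}_8$-cover of $(C_3)^4$, and genuine non-existence arguments for $(C_3)^5$ and $(C_3)^6$.
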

\begin{thm}\label{p-group} Let $G$ be a $\mathfrak{C}_9$-group. Then $G$ is a $p$-group
for a prime number $p$  if and only if $G\cong (C_{2})^8$ or
$(C_3)^5$ or  $(C_{5})^3 $.
\end{thm}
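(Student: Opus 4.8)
The plan is to strip the group theory down to a question about projective blocking sets, fix the prime with the help of Theorem \ref{paking}, and then read off the admissible dimensions one prime at a time; this parallels the proofs of Theorems \ref{p-group7} and \ref{p-group8}.

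First I would show that a $p$-group $G$ carrying a $\mathfrak{C}_9$-cover $\mathcal{C}$ is elementary abelian. In a finite $p$-group every maximal subgroup has index $p$ and therefore contains the Frattini subgroup $\Phi(G)=G^p[G,G]$; consequently $\Phi(G)\subseteq D:=\bigcap_{M\in\mathcal{C}}M$. As $\Phi(G)$ is characteristic it lies in the core $D_G$, and the core-free hypothesis $D_G=1$ forces $\Phi(G)=1$, i.e. $G\cong (C_p)^k$ for some $k\geq 2$. To locate $p$ I would invoke Theorem \ref{paking} with $n=9$: the branch $n=p+1$ gives $p=8$, which is not prime, so necessarily $9\geq\frac{3(p+1)}{2}$, that is $p+1\leq 6$; hence $p\in\{2,3,5\}$.

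Next I would pass to geometry through the dictionary of Propositions \ref{b1}, \ref{p3} and \ref{p12}: the group $(C_p)^k$ admits a $\mathfrak{C}_9$-cover exactly when $PG(k-1,p)$ contains a minimal blocking set $B$ of size $9$ with $\langle B\rangle=PG(k-1,p)$ (maximal subgroups correspond dually to points, a cover to a blocking set, irredundancy to minimality, and the core-free condition to $B$ spanning the whole space). Thus it remains, for each $p\in\{2,3,5\}$, to find all $m$ such that $PG(m,p)$ carries a spanning minimal blocking set of size $9$. For $p=2$ Theorem \ref{thm2} settles this at once: such a set exists iff $m$ is odd, and then it has exactly $m+2$ points, so $m+2=9$ yields the single value $m=7$ and the group $(C_2)^{8}$. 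For $p=5$ the projective triangle in $PG(2,5)$ has $\frac{3(5+1)}{2}=9$ points, spans the plane and is minimal, giving $(C_5)^3$. For $p=3$ one produces a spanning minimal blocking set of size $9$ inside $PG(4,3)$, giving $(C_3)^5$.

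The main obstacle is the elimination that makes these dimensions unique. Since a $9$-point set can span $PG(m,p)$ only when $9\geq m+1$, we already have $m\leq 8$; the remaining intermediate dimensions must be excluded by sharper size estimates for spanning minimal blocking sets (for instance Bruen-type lower bounds in the planar case and hyperplane-counting arguments in higher dimension), together with the explicit construction and minimality check for the $9$-point set in $PG(4,3)$. These are exactly the verifications contained in the enumeration of all spanning blocking sets of size $n<10$ announced in the introduction, and I would simply quote them for $p=3$ and $p=5$. Finally I would record the converse, namely that each of the blocking sets above reads back as an irredundant maximal core-free $9$-cover, so that $(C_2)^8$, $(C_3)^5$ and $(C_5)^3$ are indeed $\mathfrak{C}_9$-groups, completing the characterization.
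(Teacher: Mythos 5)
Your overall strategy matches the paper's: reduce to elementary abelian groups via the Frattini argument (this is exactly Proposition \ref{p12}), pin down $p\in\{2,3,5\}$ via Theorem \ref{paking}, translate the problem into spanning minimal blocking sets of size $9$, and dispose of $p=2$ by Theorem \ref{thm2}. Up to that point the proposal is sound and follows the paper.

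The gap is in the elimination step, which is where essentially all of the paper's work for this theorem lies. You write that the intermediate dimensions ``must be excluded by sharper size estimates'' and that you ``would simply quote'' the enumeration of spanning blocking sets of size $n<10$ announced in the introduction. That enumeration is not an available external fact: the table at the end of the paper cites Theorem \ref{p-group} itself as the source of the $n=9$ row, so quoting it is circular. Concretely, you still owe: (i) for $p=3$, the exclusion of $PG(2,3)$ (the Bruen--Thas \emph{upper} bound $q\sqrt{q}+1<9$ on minimal blocking sets --- not a lower bound as you suggest), of $PG(3,3)$ (size $q^2+1=10$ forces an ovoid, and size $q^2=9$ is impossible by Metsch--Storme), of $PG(5,3)$ (the paper needs a separate case analysis, Lemma \ref{C36}), and of $PG(6,3)$ (an inclusion-exclusion count of $\left|\bigcup_i M_i\right|$ after determining all intersection sizes); (ii) for $p=5$, the exclusion of $PG(3,5)$ (Heim's theorem that a minimal blocking set of size at most $3(p+1)/2+1=10$ in $PG(3,p)$, $p>3$, lies in a plane, hence cannot span) and of $PG(4,5)$ (again an inclusion-exclusion contradiction); and (iii) an actual construction and minimality check for a spanning $9$-point blocking set in $PG(4,3)$, which the paper supplies as an explicit $\mathfrak{C}_9$-cover of $(C_3)^5$ in Lemma \ref{3-group}. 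Without these arguments, or genuinely independent citations replacing them, the uniqueness half of the characterization is unproved.
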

In these characterizations we use Theorem  \ref{paking} as well as
some lemmas, the proof of which will be given  in section $3$.

We use Theorems \ref{p-group7}, \ref{p-group8} and \ref{p-group}
to give certain  non-trivial minimal blocking sets  in $PG(3,3)$
of sizes $7$ and 8; and in  $PG(4,3)$ of size $9$.
\begin{thm}\label{minimal}
 (a) Nontrivial minimal blocking sets of size $7$ exist in
 $PG(3,3)$.\\
             (b) Nontrivial minimal blocking sets of size $8$ exist in
             $PG(3,3)$.\\
             (c) Nontrivial minimal blocking sets of size $9$ exist in $PG(4,3)$.
\end{thm}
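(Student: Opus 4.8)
The plan is to exploit the dictionary between elementary abelian $\mathfrak{C}_n$-groups and spanning minimal blocking sets that has been set up earlier in the paper. Recall (Propositions \ref{b1}, \ref{p3} and \ref{p12}) that $(C_p)^{m+1}$ is a $\mathfrak{C}_n$-group if and only if $PG(m,p)$ carries a minimal blocking set $B$ of size $n$ with $\langle B\rangle=PG(m,p)$: the maximal subgroups of $(C_p)^{m+1}$ play the role of the hyperplanes, an irredundant cover corresponds to a minimal blocking set, and the core-free (equivalently, trivial-intersection) condition is precisely the requirement that $B$ span the whole space. Thus each of the three existence statements already proved in Section $4$ produces a blocking set, and all that remains is to read off its parameters and check that it is non-trivial.

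First I would feed the relevant groups into this correspondence. By Theorem \ref{p-group7} the group $(C_3)^4$ is a $\mathfrak{C}_7$-group, so it yields a minimal blocking set of size $7$ spanning $PG(3,3)$; by Theorem \ref{p-group8} the same group $(C_3)^4$ is also a $\mathfrak{C}_8$-group, giving a minimal blocking set of size $8$ spanning $PG(3,3)$; and by Theorem \ref{p-group} the group $(C_3)^5$ is a $\mathfrak{C}_9$-group, giving a minimal blocking set of size $9$ spanning $PG(4,3)$. This accounts for cases (a), (b) and (c) respectively, except for the word \emph{non-trivial}.

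It then remains to verify that none of these blocking sets contains a line. The key observation is that in $PG(m,3)$ with $m\geq 2$ a line meets every hyperplane, since by the Grassmann dimension formula $\dim(\ell\cap H)\geq 1+(m-1)-m=0$, so every line is itself a blocking set. Consequently, if a minimal blocking set $B$ contained a line $\ell$, then minimality would force $B=\ell$ and hence $|B|=3+1=4$. Since the blocking sets produced above have sizes $7$, $8$ and $9$, all strictly larger than $4$, none of them can contain a line; therefore each is non-trivial, completing all three parts.

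I would not expect any serious obstacle here: once Theorems \ref{p-group7}, \ref{p-group8} and \ref{p-group} are in hand, the only genuinely new point is the non-triviality check, and that reduces to the elementary fact that a minimal blocking set containing a line must coincide with that line and so can have only $q+1=4$ points.
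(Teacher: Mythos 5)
Your proposal is correct and follows essentially the same route as the paper: both derive the blocking sets from the $\mathfrak{C}_7$-, $\mathfrak{C}_8$- and $\mathfrak{C}_9$-covers of $(C_3)^4$ and $(C_3)^5$ constructed in Section 4, via the dictionary of Propositions \ref{b1}, \ref{p3} and \ref{p12}. The only differences are that the paper additionally writes down explicit coordinates for each blocking set (read off from the covers in Lemmas \ref{81}, \ref{cover81} and \ref{3-group}), whereas you argue purely existentially, and that you spell out the non-triviality check (a minimal blocking set containing a line would equal that line and have only $q+1=4$ points, while these have $7$, $8$ and $9$), which the paper leaves implicit.
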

As a corollary to Theorems \ref{p-group7}, \ref{p-group8} and
\ref{p-group} and some known results we give in a table all pairs
$(m,p)$ ($m>0$ an integer and $p$ a prime number) for which there
is a blocking set $B$ of size $n<10$ in $PG(m,p)$ such that
$d(B)=m$.
\section{\bf Relations between  blocking sets and $\mathfrak{C}_n$-groups and characterization of minimal  blocking sets in $PG(n,2)$}

As we mentioned in section 1,   by a blocking set in $PG(n,q)$, we
mean a blocking set with respect to hyperplanes in $PG(n,q)$.

Now we give some notations and definitions as needed in the
sequel.  We denote the product of $n$ copies of $\mathbb{F}_q$
 by $(\mathbb{F}_q)^n$. We note that $(\mathbb{F}_q)^n$ is a vector
space of dimension $n$ over $\mathbb{F}_q$. If
$b=(b_1,\dots,b_n)\in (\mathbb{F}_q)^n$, we denote by $M_b$ the
set of elements $x=(x_1,\dots,x_n)\in (\mathbb{F}_q)^n$, such
that $b\cdot x=\sum_{i=1}^n b_ix_i$ is equal to zero. Note that
if $0\not=b$, then $M_b$ is an $(n-1)$-dimensional subspace of
the vector space $(\mathbb{F}_q)^n$ and every $(n-1)$-dimensional
subspace of $(\mathbb{F}_q)^n$ equals to $M_b$ for some non-zero
$b\in (\mathbb{F}_q)^n$. Since for every $0\not=\lambda\in
\mathbb{F}_q$, $M_b=M_{\lambda b}$, $M_{\mathfrak{p}}$ is
well-defined for every point $\mathfrak{p}$ of $PG(n-1,q)$, and
$M_{\mathfrak{p}}$ may be considered as a hyperplane in
$PG(n-1,q)$.  We now give some results which clarify the
relations between non-trivial minimal blocking sets of size $n$
and $\mathfrak{C}_n$-covers for groups.\\
The following Propositions \ref{b1}, \ref{p3} and \ref{p2} are
well-known and their proofs are straightforward.

\begin{prop}\label{b1}
Let $B$ be a set of points in $PG(n,q)$. Then $B$ is a blocking
set in $PG(n,q)$ if and only if the set $\mathcal{C}=\{M_b \;|\;
b\in B\}$ is a  $|B|$-cover for the abelian group
$(\mathbb{F}_q)^{n+1}$.
\end{prop}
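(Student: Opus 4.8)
The plan is to exploit the standard duality between points and hyperplanes of $PG(n,q)$ together with the bookkeeping observation that the zero vector of $(\mathbb{F}_q)^{n+1}$ is automatically contained in every subspace $M_b$. First I would recall that a point of $PG(n,q)$ is a one-dimensional subspace of the vector space $(\mathbb{F}_q)^{n+1}$, so I may identify each $b\in B$ with a nonzero vector (determined up to a nonzero scalar), and that every hyperplane of $PG(n,q)$ is the projectivization of $M_c$ for a suitable nonzero $c\in(\mathbb{F}_q)^{n+1}$, with $M_c=M_{\lambda c}$ depending only on the point represented by $c$.

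The heart of the argument is the symmetry of the bilinear form. For $b,c\in(\mathbb{F}_q)^{n+1}$ one has $b\cdot c=c\cdot b$, so $b\in M_c$ if and only if $c\in M_b$. Hence I would rewrite the blocking condition as follows: $B$ meets the hyperplane determined by $c$ precisely when some $b\in B$ lies in $M_c$, equivalently when $c$ lies in $M_b$ for some $b\in B$. Therefore $B$ is a blocking set if and only if every nonzero $c\in(\mathbb{F}_q)^{n+1}$ belongs to $\bigcup_{b\in B} M_b$.

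To finish I would dispose of the zero vector: since $b\cdot 0=0$, we have $0\in M_b$ for every $b$, so the condition ``every nonzero $c$ is covered'' is equivalent to ``every $c$ is covered'', i.e. to $\bigcup_{b\in B} M_b=(\mathbb{F}_q)^{n+1}$, which is exactly the statement that $\mathcal{C}=\{M_b\mid b\in B\}$ is a cover of the additive group $(\mathbb{F}_q)^{n+1}$. It remains to check the numerology and that $\mathcal{C}$ consists of proper subgroups: each $M_b$ with $b\neq 0$ is a proper $n$-dimensional subspace, hence a proper additive subgroup; and since $M_b^{\perp}=\langle b\rangle$, distinct points of $B$ yield distinct $M_b$, so the map $b\mapsto M_b$ is a bijection and $|\mathcal{C}|=|B|$, giving a $|B|$-cover.

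I do not expect a genuine obstacle here: the statement is essentially a restatement of orthogonality, and the only points requiring care are the treatment of the zero vector, which a priori distinguishes ``cover'' from ``blocking set'', and the verification that $b\mapsto M_b$ is injective on the points of $PG(n,q)$.
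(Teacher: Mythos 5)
Your proof is correct: the symmetry $b\cdot c=c\cdot b$, the observation that $0\in M_b$ automatically, and the injectivity of $b\mapsto M_b$ on projective points are exactly the ingredients needed. The paper offers no proof of Proposition \ref{b1}, dismissing it as well-known and straightforward, and your duality argument is precisely the straightforward argument it has in mind.
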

\begin{prop}\label{p3}
Let $B$ be a set of points in $PG(n,q)$. Then $B$ is a minimal
blocking set in $PG(n,q)$ if and only if the set
$\mathcal{C}=\{M_b \;|\; b\in B\}$ is an irredundant $|B|$-cover
for the abelian group $(\mathbb{F}_q)^{n+1}$.
\end{prop}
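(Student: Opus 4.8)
The plan is to deduce Proposition \ref{p3} directly from Proposition \ref{b1} by transferring the subset structure of $B$ to that of $\mathcal{C}$ through the point--hyperplane duality, since the definitions of \emph{minimal blocking set} and \emph{irredundant cover} are both phrased purely in terms of proper subsets. Thus the whole content reduces to checking that ``blocking'' corresponds to ``covering'' not just for $B$ but for every subset of $B$, which is exactly what Proposition \ref{b1} delivers once it is applied to subsets.

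First I would record that the assignment $b\mapsto M_b$ is a bijection from the points of $PG(n,q)$ onto the set of codimension-one $\mathbb{F}_q$-subspaces of $(\mathbb{F}_q)^{n+1}$: indeed $M_b=M_{b'}$ holds precisely when $b$ and $b'$ determine the same point of $PG(n,q)$, and every hyperplane of the vector space arises as some $M_b$. In particular this map restricts to a bijection $B\to\mathcal{C}$ with $|\mathcal{C}|=|B|$, and for every subset $B'\subseteq B$ it carries $B'$ to $\mathcal{C}'=\{M_b\;|\;b\in B'\}$, with $B'\subsetneq B$ if and only if $\mathcal{C}'\subsetneq\mathcal{C}$. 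Since any subset $B'$ is again a set of points of $PG(n,q)$, Proposition \ref{b1} applies verbatim to it and gives: $B'$ is a blocking set in $PG(n,q)$ if and only if $\mathcal{C}'$ is a cover of $(\mathbb{F}_q)^{n+1}$.

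Finally I would assemble these facts. Running the equivalence of the previous paragraph over all proper subsets shows that \emph{some} proper subset of $B$ is a blocking set if and only if \emph{some} proper subset of $\mathcal{C}$ is a cover; negating, no proper subset of $B$ blocks if and only if no proper subset of $\mathcal{C}$ covers. Combining this with Proposition \ref{b1} for $B$ itself (that $B$ is a blocking set exactly when $\mathcal{C}$ is a cover) yields that $B$ is a minimal blocking set if and only if $\mathcal{C}$ is an irredundant $|B|$-cover, which is the assertion. The result is essentially formal, so there is no genuine obstacle; the closest thing to a subtlety is the bijectivity of $b\mapsto M_b$, which is what guarantees $|\mathcal{C}|=|B|$ and, more importantly, that proper deletions on the two sides correspond exactly, so that minimality on the blocking-set side matches irredundancy on the cover side. (Alternatively, one may use that both properties are upward-closed under inclusion, so minimality and irredundancy can each be tested on single-element deletions $B\setminus\{b\}$ and $\mathcal{C}\setminus\{M_b\}$, and then invoke Proposition \ref{b1} for $B\setminus\{b\}$ directly.)
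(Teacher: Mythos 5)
Your proof is correct, and it fills in exactly the argument the paper leaves implicit (the paper states Proposition \ref{p3} without proof, calling it straightforward): the key observations are that $b\mapsto M_b$ is a bijection from points to hyperplanes, so proper subsets of $B$ correspond exactly to proper subsets of $\mathcal{C}$, and that Proposition \ref{b1} applied to each such subset converts ``no proper subset blocks'' into ``no proper subset covers.'' Nothing is missing.
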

\begin{rem}\label{rem1}
Note that if $q$ is prime, then  the cover $\mathcal{C}$ in the
statements of Propositions  \ref{b1} and \ref{p3} is a maximal
cover for $(\mathbb{F}_q)^{n+1}$.
\end{rem}
\begin{rem} It is easy to
see that a  (minimal) blocking set $B$ with $d(B)=d$ in $PG(n,q)$
can be obtained from a (minimal) blocking set in $PG(d,q)$.  So
if we adopt an induction process on $n$  to find all minimal
blocking sets $B$ in $PG(n,q)$, we must find only all those
 minimal blocking sets with $d(B)=n$.
\end{rem}
\begin{prop}\label{p2}
Let $B$ be a set of points in $PG(n,q)$. Then $B$ is a blocking
set with $d(B)=n$ if and only if the set $\mathcal{C}=\{M_b \;|\;
b\in B\}$ is a  core-free $|B|$-cover for the abelian group
$(\mathbb{F}_q)^{n+1}$.
\end{prop}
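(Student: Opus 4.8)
The plan is to reduce the statement to Proposition \ref{b1} together with a single dimension count in the vector space $G=(\mathbb{F}_q)^{n+1}$. By Proposition \ref{b1}, $B$ is a blocking set if and only if $\mathcal{C}=\{M_b \mid b\in B\}$ is a $|B|$-cover of the abelian group $G$; here the $M_b$ are pairwise distinct because $b\mapsto M_b$ is the standard point--hyperplane duality, so the cover genuinely has size $|B|$. Thus it suffices to show that, under the assumption that $B$ is a blocking set (equivalently, that $\mathcal{C}$ is a $|B|$-cover), the extra geometric condition $d(B)=n$ is equivalent to $\mathcal{C}$ being core-free.

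The first step is to simplify the notion of core-free in this abelian setting. Since $G$ is abelian, every subgroup is normal and hence equals its own core; consequently $\mathcal{C}$ is core-free precisely when the intersection $D:=\bigcap_{b\in B}M_b$ is the trivial subgroup $\{0\}$ of $G$.

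The second step is to identify $D$ explicitly as a linear-algebraic object. By the definition of $M_b$ we have $D=\{x\in G \mid b\cdot x=0 \text{ for all } b\in B\}$, which is exactly the annihilator (orthogonal complement) $\langle B\rangle^{\perp}$ of the subspace $\langle B\rangle$ spanned by $B$, taken with respect to the nondegenerate bilinear form $b\cdot x=\sum_{i}b_i x_i$ on $(\mathbb{F}_q)^{n+1}$. The usual dimension formula gives $\dim_{\mathbb{F}_q}\langle B\rangle^{\perp}=(n+1)-\dim_{\mathbb{F}_q}\langle B\rangle$, so $D=\{0\}$ if and only if $\dim_{\mathbb{F}_q}\langle B\rangle=n+1$, i.e. $\langle B\rangle=(\mathbb{F}_q)^{n+1}$. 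Translating back to projective language, the vector subspace $\langle B\rangle$ fills $(\mathbb{F}_q)^{n+1}$ exactly when the projective span of $B$ is all of $PG(n,q)$, which is precisely the statement $d(B)=n$. Chaining the three equivalences yields the proposition.

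I do not expect a serious obstacle here, as the whole argument is elementary linear algebra. The only points that require care are the bookkeeping of the dimension shift between $PG(n,q)$ and its $(n+1)$-dimensional coordinate space, and the observation that in an abelian group ``core-free'' collapses to ``the intersection is trivial''; once these are pinned down, the equivalences follow immediately from the nondegeneracy of the form $b\cdot x$.
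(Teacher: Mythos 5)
Your argument is correct and is precisely the ``straightforward'' proof the paper has in mind (the paper states Propositions \ref{b1}--\ref{p2} without proof, calling them well-known): reduce to Proposition \ref{b1}, note that in the abelian group $(\mathbb{F}_q)^{n+1}$ core-free just means $\bigcap_{b\in B}M_b=\{0\}$, and identify that intersection as $\langle B\rangle^{\perp}$, which vanishes exactly when $d(B)=n$. No gaps; the remark that the duality equivalence $D=\{0\}\iff d(B)=n$ needs no blocking-set hypothesis keeps both directions of the biconditional clean.
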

\begin{prop}\label{p12}
Let $p$ be a prime number and $n$ be a positive integer. Then a
finite $p$-group $G$ admits a $\mathfrak{C}_{n+1}$-cover if and
only if $G\cong (C_p)^{m+1}$ for some positive integer $m$ such
that $PG(m,p)$ has a minimal blocking set $B$ with $d(B)=m$ and
$|B|=n+1$.
\end{prop}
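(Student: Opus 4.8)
The plan is to show that the whole content of the proposition reduces to two ingredients: a structural reduction showing that any finite $p$-group admitting a maximal core-free cover must be elementary abelian, and the translation dictionary already supplied by Propositions \ref{b1}, \ref{p3}, \ref{p2} together with Remark \ref{rem1}. I would treat the two implications separately, doing the real work in the forward direction.

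First I would handle the forward direction. Assume $G$ is a finite $p$-group carrying a $\mathfrak{C}_{n+1}$-cover $\mathcal{C}$. The key observation is that every maximal subgroup $M$ of a finite $p$-group has index $p$, is therefore normal, and contains the Frattini subgroup $\Phi(G)$ (the intersection of all maximal subgroups of $G$). Consequently the intersection $D=\bigcap_{M\in\mathcal{C}}M$ is normal, being an intersection of normal subgroups, and satisfies $\Phi(G)\subseteq D$. Since $\mathcal{C}$ is core-free and $D$ is normal, $D=D_G=1$, which forces $\Phi(G)=1$; and a finite $p$-group with trivial Frattini subgroup is elementary abelian. Hence $G\cong (C_p)^{k}$ for some $k\geq 1$. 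The case $k=1$ is impossible, since the cyclic group $C_p$ has no cover by proper subgroups, so $k\geq 2$ and I write $k=m+1$ with $m\geq 1$.

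Next I would invoke the identification $G\cong (C_p)^{m+1}=(\mathbb{F}_p)^{m+1}$ and recall that the maximal subgroups of $(\mathbb{F}_p)^{m+1}$ are precisely the hyperplanes $M_b$, which are in bijection with the points of $PG(m,p)$ via $M_b\leftrightarrow b$. Writing $B$ for the set of points corresponding to the members of $\mathcal{C}$, this bijection gives $|B|=|\mathcal{C}|=n+1$. Because $\mathcal{C}$ is an irredundant cover, Proposition \ref{p3} shows $B$ is a minimal blocking set, and because $\mathcal{C}$ is core-free, Proposition \ref{p2} gives $d(B)=m$. This produces a minimal blocking set $B$ in $PG(m,p)$ with $d(B)=m$ and $|B|=n+1$, as required.

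For the converse I would simply reverse the dictionary. Given such a $B$, set $G=(C_p)^{m+1}=(\mathbb{F}_p)^{m+1}$ and $\mathcal{C}=\{M_b \;|\; b\in B\}$. Proposition \ref{p3} shows $\mathcal{C}$ is an irredundant $(n+1)$-cover, Remark \ref{rem1} shows it is maximal since $p$ is prime, and Proposition \ref{p2} shows it is core-free; hence $\mathcal{C}$ is a $\mathfrak{C}_{n+1}$-cover of $G$. The only genuinely new step, and the one I regard as the crux, is the elementary-abelian reduction in the forward direction via the Frattini subgroup; the rest is bookkeeping against the already-established correspondence, with minor care over the degenerate case $m=0$.
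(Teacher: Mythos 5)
Your proof is correct and takes essentially the same route as the paper: the Frattini-subgroup argument (every maximal subgroup of a finite $p$-group contains $\Phi(G)$, and core-freeness forces $\Phi(G)=1$, hence $G$ elementary abelian) followed by the translation supplied by Propositions \ref{p3} and \ref{p2} and Remark \ref{rem1}. Your explicit exclusion of the degenerate case $k=1$ is a minor point the paper leaves implicit, but it changes nothing of substance.
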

\begin{proof}
Let $G$ be a finite $p$-group admitting a
$\mathfrak{C}_{n+1}$-cover. Then $G$ has a maximal irredundant
core-free $(n+1)$-cover, $\mathcal{C}=\{M_i \;|\;
i=1,\dots,n+1\}$ say. Since the Frattini subgroup $\Phi(G)$ of
$G$ is contained in $M_i$ for every $i\in\{1,\dots,n+1\}$,
$\Phi(G)\leq D_G=1$, where $D$ is the intersection of the cover
$\mathcal{C}$. Hence $\Phi(G)=1$ and so $G$ is isomorphic to
$(C_p)^{m+1}$ for some positive integer $m$. Now Propositions
\ref{p3} and \ref{p2} and Remark \ref{rem1} complete the proof.
\end{proof}
Let $B$ be a set of points in $PG(n,q)$. Call any  $|B|\times
(n+1)$ matrix whose rows are generators  of points of $B$ a {\em
blocking matrix} of $B$ $($regard a point in
 $PG(n,q)$ as a one dimensional subspace in $(\mathbb{F}_q)^{n+1})$.

 Consider the following  properties of a  blocking  matrix $A$ of a set of points
  $B$ in $PG(n,q)$

  $(a)$ The $|B|\times 1$ column matrix $AX$ has at least one zero entry for every
$(n+1)\times 1$ column matrix $X$ with entries from
$\mathbb{F}_q$.

  $(b)$ For each $i\in \{1,\dots,|B|\}$, there is an $(n+1)\times 1$ matrix $X_i$ with
entries from $\mathbb{F}_q$ such that the $i$th entry of $AX_i$ is
zero and all the others are non-zero.

\begin{prop}\label{p4}
Let $B$ be a set of points in $PG(n,q)$ and let $A$ be any
blocking matrix of $B$. Then
 \begin{enumerate} \item $B$ is a  blocking set in $PG(n,q)$ with
$d(B)=rank(A)-1$ if and only if a blocking matrix of $B$ satisfies
the property (a).
\item $B$ is a minimal   blocking set in $PG(n,q)$ with $d(B)=rank(A)-1$ if and only
if a blocking matrix of $B$ satisfies the properties $(a)$ and
$(b)$.
\end{enumerate}
\end{prop}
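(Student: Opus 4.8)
The plan is to translate everything through the standard dictionary between the matrix product $AX$ and incidence in $PG(n,q)$. Write $b_1,\dots,b_{|B|}$ for the rows of $A$, so that $b_i$ is a representative vector of the $i$th point of $B$. A nonzero column vector $X\in(\mathbb{F}_q)^{n+1}$ determines the hyperplane $M_X=\{v : v\cdot X=0\}$ of $PG(n,q)$, and every hyperplane arises this way, with $X$ unique up to a nonzero scalar. The crucial observation is that the $i$th entry of the column $AX$ is exactly $b_i\cdot X$; hence the $i$th entry of $AX$ vanishes if and only if the $i$th point of $B$ lies on the hyperplane $M_X$. I would first record two preliminary remarks: that $d(B)=\mathrm{rank}(A)-1$ holds automatically, since the subspace spanned by $B$ is the projectivization of the row space of $A$, whose vector-space dimension is $\mathrm{rank}(A)$; and that properties $(a)$ and $(b)$ do not depend on the chosen blocking matrix, because passing to another blocking matrix only rescales each row by a nonzero scalar and permutes the rows, operations that preserve both the zero/nonzero pattern of the entries of $AX$ and the rank.

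For part (1), I would argue both implications directly from the dictionary. If $B$ is a blocking set, then for every nonzero $X$ the hyperplane $M_X$ meets $B$, so some entry of $AX$ is zero; for $X=0$ the claim is trivial, so $(a)$ holds. Conversely, if $(a)$ holds, then given any hyperplane $H=M_X$ (with $X\neq 0$) the column $AX$ has a zero entry, so some point of $B$ lies on $H$; thus $B$ meets every hyperplane and is a blocking set. Combined with the automatic equality $d(B)=\mathrm{rank}(A)-1$, this gives (1).

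For part (2), I would reduce minimality to a per-index statement: a blocking set $B$ is minimal exactly when, for each $i$, the subset $B\setminus\{b_i\}$ fails to be a blocking set, i.e.\ some hyperplane $M_{X_i}$ avoids every point of $B$ other than the $i$th. Since $B$ itself is blocking, such an $M_{X_i}$ cannot miss $b_i$ as well, so it must pass through $b_i$; translated back through the dictionary this says precisely that the $i$th entry of $AX_i$ is zero while all the other entries are nonzero, which is $(b)$. Conversely, given $(a)$ and $(b)$, property $(a)$ makes $B$ blocking, and for each $i$ the vector $X_i$ from $(b)$ is nonzero, since its column $AX_i$ has nonzero entries as soon as $|B|\geq 2$; hence $M_{X_i}$ is a genuine hyperplane passing through $b_i$ but through no other point of $B$, witnessing that $B\setminus\{b_i\}$ is not blocking, so $B$ is minimal.

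The argument is essentially a chain of definitional equivalences, so there is no single hard step; the only points requiring care are the book-keeping ones flagged above, namely that the case $X=0$ contributes nothing to $(a)$, that the vectors $X_i$ produced in $(b)$ are genuinely nonzero so that the $M_{X_i}$ are hyperplanes, and that the two properties are invariant under the freedom in choosing a blocking matrix.
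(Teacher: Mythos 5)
Your proof is correct. It does not follow the paper's route, but the difference is one of packaging rather than substance: the paper disposes of Proposition \ref{p4} in a single line by citing Propositions \ref{p3} and \ref{p2}, i.e.\ by trading each point $b\in B$ for the maximal subgroup $M_b=\{x: b\cdot x=0\}$ of $(\mathbb{F}_q)^{n+1}$, so that property $(a)$ becomes ``the $M_b$ cover the group'' and property $(b)$ becomes irredundancy of that cover (the witness $X_i$ lies in $M_{b_i}$ and in no other $M_{b_j}$), with core-freeness handling the dimension clause. You read the same identity $(AX)_i=b_i\cdot X$ in the dual direction, as incidence of the point $b_i$ with the hyperplane $M_X$, and verify both equivalences directly without ever mentioning covers; the two arguments are two faces of the same bilinear pairing. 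What your write-up buys is that it is self-contained and makes explicit the bookkeeping the paper leaves implicit: that $d(B)=\mathrm{rank}(A)-1$ holds automatically (so that clause is not an extra condition to verify), that $X=0$ contributes nothing to $(a)$, that the witnesses $X_i$ in $(b)$ are genuinely nonzero because a blocking set has at least two points, and that $(a)$ and $(b)$ are insensitive to the choice of blocking matrix since any two such matrices differ only by row permutations and nonzero row scalings. The paper's route buys brevity and keeps the group-theoretic dictionary, which it needs anyway for Proposition \ref{p12} and the applications to $\mathfrak{C}_n$-covers, in the foreground.
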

\begin{proof}
It follows from Propositions \ref{p3} and \ref{p2}.
\end{proof}
In the following we apply a well-known  idea which is used in
coding theory to define an equivalence on  linear codes (see e.g,
pp. 50-51
in \cite{Hill}).\\

 Let $A_1$ and $A_2$ be two matrices of the same
size with entries from $\mathbb{F}_q$. We say that $A_1$ is {\em
equivalent} to $A_2$, if $A_2$ can be obtained from $A_1$ by a
sequence of operations of the following types:
\begin{itemize}
\item[(C1)] Permutation of the columns. \item[(C2)] Multiplication
of a column by a non-zero scalar from $\mathbb{F}_q$. \item[(C3)]
Addition of a scalar multiple of one column to another.
\item[(R1)] Permutation of the rows. \item[(R2)] Multiplication
of any row by a non-zero scalar.
\end{itemize}
\begin{thm}
Let $B$ be a minimal  blocking set in $PG(n,q)$ and let $A$ be a
blocking matrix of $B$. If $A'$ is a matrix obtained from $A$ by
one of the  operations (C1) to (R2), then (the points generated
by) the rows of $A'$ form a minimal blocking set $B'$ in $PG(n,q)$
with $d(B)=d(B')$ and $|B|=|B'|$.
\end{thm}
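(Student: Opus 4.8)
The plan is to show that each of the five elementary operations (C1)--(R2) preserves the three defining properties of a minimal blocking set: that $B'$ is a blocking set, that it is minimal, and that $\langle B'\rangle$ has the same dimension as $\langle B\rangle$. By Proposition~\ref{p4}, it suffices to verify that the operations preserve both the rank of the blocking matrix (which controls $d(B)=\mathrm{rank}(A)-1$) and the two properties (a) and (b). I would handle the five operations by grouping them according to what they do: (R1) and (R2) act on the rows, i.e. on the points of $B$ themselves, while (C1), (C2), (C3) act on the columns, i.e. they amount to applying an invertible linear map (a change of coordinates) to the ambient space $(\mathbb{F}_q)^{n+1}$.

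For the row operations the argument is essentially immediate. Operation (R1) merely reorders the rows, which permutes the points of $B$ without changing the set of points, so $B'=B$ as a set of points and all three properties are trivially preserved. Operation (R2) multiplies a row by a nonzero scalar $\lambda$; since a point of $PG(n,q)$ is a one-dimensional subspace, the vectors $v$ and $\lambda v$ generate the same point, so again $B'=B$ as a set of points of $PG(n,q)$, and nothing changes. The only care needed here is to note that because we regard rows up to nonzero scalars, the matrix $A$ is itself only a representative of $B$, so (R1) and (R2) do not alter the geometric object at all.

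For the column operations I would argue via the linear-algebraic reformulation. Each of (C1), (C2), (C3) is realized by right-multiplication $A\mapsto AP$ by an invertible $(n+1)\times(n+1)$ matrix $P$ over $\mathbb{F}_q$ (a permutation matrix, a diagonal matrix with one nonzero diagonal entry changed, or an elementary transvection, respectively). Since $P$ is invertible, $\mathrm{rank}(AP)=\mathrm{rank}(A)$, which immediately gives $d(B')=d(B)$; and since $A$ and $AP$ have the same number of rows, $|B'|=|B|$. For properties (a) and (b) I would use the substitution $X\mapsto P^{-1}X$: the map $X\mapsto P^{-1}X$ is a bijection of the column vectors in $(\mathbb{F}_q)^{n+1}$, and $(AP)X=A(PX)$, so the $i$th entry of $(AP)X$ is zero (resp.\ nonzero) exactly when the $i$th entry of $A(PX)$ is zero (resp.\ nonzero). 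Running $X$ over all column vectors is the same as running $PX$ over all column vectors, so property (a) for $AP$ is equivalent to property (a) for $A$; and the witness $X_i$ for (b) with respect to $A$ yields the witness $P^{-1}X_i$ with respect to $AP$ (and conversely), giving the equivalence of (b). Hence $A'=AP$ satisfies (a) and (b) iff $A$ does.

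The step requiring the most attention is verifying that (C3), addition of a scalar multiple of one column to another, is indeed right-multiplication by an invertible elementary matrix, and more importantly checking that after a column operation the rows of $A'$ still generate genuine \emph{points}, i.e.\ no row collapses to the zero vector. The bijection argument above shows the zero-row issue cannot arise: property (a) applied to $A'$, once established, together with property (b), forces each row of $A'$ to be nonzero (a zero row would make the corresponding entry of $A'X$ vanish for every $X$, contradicting (b)). Thus the column operations send the blocking matrix of a minimal blocking set to the blocking matrix of another minimal blocking set, and composing the individual cases handles an arbitrary single operation of types (C1)--(R2), completing the proof.
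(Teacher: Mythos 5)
Your proof is correct and follows essentially the same route as the paper: the paper's (very terse) argument simply invokes Proposition~\ref{p4} together with the observation that $AX=x_1A_1+\dots+x_{n+1}A_{n+1}$, which is exactly your point that column operations amount to an invertible substitution in $X$ while row operations only permute or rescale the points. You have merely written out in full the details the paper declares ``straightforward.''
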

\begin{proof}
Using Proposition \ref{p4} and noting that
$$AX=x_1A_1+\dots +x_{n+1}A_{n+1},$$ where $A_1,\dots,A_{n+1}$ are
columns of $A$ and
  $X=\begin{bmatrix}
    x_1 \\ x_2 \\ \vdots\\  x_{n+1}
  \end{bmatrix}$, the proof is straightforward.
\end{proof}
We say that two minimal  blocking sets are {\em equivalent} if any
blocking matrix of one of them is equivalent to any blocking
matrix of the other.
\begin{thm}\label{thm1}
Let $A$ be any blocking matrix of a minimal  blocking set $B$ in
$PG(n,q)$,  let $k=|B|$ and $d=d(B)$.   Then  $A$ is equivalent to
a matrix of the form
  $\begin{bmatrix}
    I_{d+1} &|\;\;\;  \\
     ---   &| K \\
      L     &|\;\;\;
  \end{bmatrix},$
where $I_{d+1}$ is the $(d+1)\times (d+1)$ identity matrix,  $L$
is a $(k-d-1)\times (d+1)$ matrix and $K$ is a $k \times (n-d)$
matrix. Also $B$ can be obtained from a blocking set $\bar{B}$
with $d(\bar{B})=d$ in $PG(d,q)$ such that any blocking matrix of
$\bar{B}$ is equivalent to a matrix of the form $\begin{bmatrix}
    I_{d+1}   \\
     L
  \end{bmatrix},$ where $L$ is a $(k-d-1)\times (d+1)$ matrix.
\end{thm}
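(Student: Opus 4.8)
The plan is to read the block structure directly off the rank of $A$: by Proposition \ref{p4} the hypothesis $d(B)=rank(A)-1$ forces $rank(A)=d+1$, and a minimal blocking set corresponds to a matrix satisfying properties (a) and (b). First I would use the permitted operations to normalise $A$. Since the rows of $A$ generate the points of $B$, the span of $B$ has projective dimension $rank(A)-1=d$, so $A$ has exactly $d+1$ linearly independent rows; applying the row permutation (R1) I would bring these to the top, producing a top $(d+1)\times(n+1)$ block of rank $d+1$. This block has $d+1$ linearly independent columns, which the column permutation (C1) moves to the left, giving a top block $[\,P\mid Q\,]$ with $P$ an invertible $(d+1)\times(d+1)$ matrix. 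Because the elementary column operations (C2) and (C3) generate every invertible right multiplication, I can turn $[\,P\mid Q\,]$ into $[\,I_{d+1}\mid 0\,]$.

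The key observation for the block form comes next. After this reduction the top $d+1$ rows already span the entire $(d+1)$-dimensional row space of $A$, so each of the remaining $k-d-1$ rows is a linear combination of them and hence has zero entries in its last $n-d$ coordinates. Thus the whole right-hand block vanishes and $A$ is equivalent to $\left[\begin{smallmatrix} I_{d+1} & 0\\ L & 0\end{smallmatrix}\right]$, which is a matrix of the stated form with $K=0$. This settles the first assertion.

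For the second assertion I would set $\bar{A}=\left[\begin{smallmatrix} I_{d+1}\\ L\end{smallmatrix}\right]$ and check, via Proposition \ref{p4}, that its rows form a minimal blocking set $\bar{B}$ in $PG(d,q)$ with $d(\bar{B})=d$. The crucial identity is that for every $X=\left[\begin{smallmatrix} Y\\ Z\end{smallmatrix}\right]$ with $Y$ of size $(d+1)\times 1$, the vanishing of the last $n-d$ columns of the reduced matrix $A'$ gives $A'X=\bar{A}Y$. By the preceding theorem the equivalence operations preserve properties (a) and (b), so $A'$ satisfies them; the identity $A'X=\bar{A}Y$ then transfers both properties to $\bar{A}$ as $Y$ ranges over all $(d+1)\times 1$ columns. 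Since $rank(\bar{A})=d+1$, Proposition \ref{p4} yields that the rows of $\bar{A}$ form a minimal blocking set $\bar{B}$ in $PG(d,q)$ with $d(\bar{B})=d$, and $B$ is recovered from $\bar{B}$ under the coordinate embedding $PG(d,q)\hookrightarrow PG(n,q)$ onto the subspace spanned by $B$.

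The step I expect to be the main obstacle is the transfer of property (b): one must verify that, because the last $n-d$ columns are zero, $A'X$ depends only on the top part $Y$ of $X$, so the witness columns $X_i$ for $A'$ restrict to witness columns $Y_i$ for $\bar{A}$ with the same zero/non-zero pattern. The companion check for property (a), together with the rank bookkeeping that forces the right-hand block to vanish and guarantees no two rows of $\bar{A}$ collapse to the same point of $PG(d,q)$, is then routine.
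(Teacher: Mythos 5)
Your proof is correct and follows essentially the same route as the paper, which simply delegates this statement to the standard-form reduction for generator matrices (Theorem 5.5 of Hill's coding theory book): bring $d+1=\mathrm{rank}(A)$ independent rows to the top, use column operations to reach $[\,I_{d+1}\mid 0\,]$ there, observe the remaining rows then also vanish in the last $n-d$ coordinates, and transfer properties (a) and (b) to the truncated matrix via $A'X=\bar{A}Y$. Your write-up fills in the details the paper omits (including the harmless strengthening $K=0$), but introduces no new idea beyond the cited argument.
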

\begin{proof}
It is straightforward, see e.g., the proof of  Theorem 5.5 in p.
51 of \cite{Hill}.
\end{proof}

\noindent{\bf Proof of Theorem \ref{thm2}.} We prove the
following statement which is slightly more general than the statement of Theorem \ref{thm2}:\\
 A minimal blocking set $B$ with $d(B)=d$ in
$PG(n,2)$ exists if and only if $d$ is odd, $|B|=d+2$ and $B$ can
be obtained from a blocking set $\bar{B}$ with $d(\bar{B})=d$  in
$PG(d,2)$ such that any blocking matrix of $\bar{B}$ is
equivalent to a $(d+2)\times (d+1)$  matrix of the form
$$  \begin{bmatrix}
     &  & &I_{d+1} & & \\
        1&1&1&\cdots&1&1
  \end{bmatrix}.$$
 Let $B$ be a minimal
blocking set in $PG(n,2)$ with $d(B)=d$.
 By Theorem \ref{thm1}, $B$ can be obtained from a minimal blocking set $\bar{B}$  in
$PG(d,q)$ with $d(\bar{B})=d$ such that any blocking matrix of
$\bar{B}$ is equivalent to a matrix
 $A'=\begin{bmatrix}
    I_{d+1} \\
      L
  \end{bmatrix},$ where   $L$ is a $(k-d-1)\times (d+1)$ matrix, and $k=|B|=|\bar{B}|$.
   Let $
  \bold{a}=\begin{bmatrix}
    a_1 & a_2 &\cdots&a_{d+1} \\
      \end{bmatrix}$ be an arbitrary row of $L$. Note that the
      column   matrix $X_i$ which satisfies Property (2) for $A'$ in  Proposition
      \ref{p4} is unique, for every $i\in\{1,\dots,k\}$ and indeed $X_i$ is
      the $(d+1)\times 1$ matrix in which the $i$th entry is zero and all other entries
      are $1$.
   Thus $\bold{a}\cdot X_i\not=0$ and so
   $$a_1+\cdots+a_{i-1}+a_{i+1}+\cdots+a_{d+1}=1 \;\;\text{for all}\;\;
   i\in\{1,\dots,d+1\}. \eqno{(I)}$$
   On the other hand, by  Proposition \ref{p4}(1), the column matrix
 $A' \begin{bmatrix}
    1  \\
    1\\
    \vdots\\
    1
  \end{bmatrix}$ must have a zero entry. But all $(d+1)$ first
  entries are non-zero, so we have that $$a_1+\cdots+a_{d+1}=0. \eqno{(II)}$$
  Now it follows from $(I)$ and $(II)$  that $\bold{a}=\begin{bmatrix}
    1 & 1 &\cdots&1 \\
      \end{bmatrix}$. Therefore $L$ must have only one row which
      equals to $\begin{bmatrix}
    1 & 1 &\cdots&1 \\
      \end{bmatrix}.$ Now equality $(II)$ implies that
  $\underbrace{1+1+\cdots+1}_{d+1}=0$ from which it follows   that $d$ must be
  odd. This completes the proof. \hfill$\Box$\\

\noindent {\bf Proof of Theorem \ref{application}.} It follows
from  Proposition \ref{p12} and Theorem \ref{thm2}. \hfill $\Box$
\section{\bf  $p$-Groups with a $\mathfrak{C}_n$-cover}

We shall need the following lemma in the sequel.
\begin{lem}\label{lem2.2}{\rm (See  Lemma 2.2 of \cite{BFS97})}
Let $\Gamma=\{A_i \;:\; 1\leq i\leq m \}$ be an irredundant
covering of a group $G$ whose  intersection of the members  is
$D$.

 {\rm(a)} If $p$ is a prime, $x$ a $p$-element of $G$ and $\left| \left\{
i:x\in A_{i}\right\} \right|  =n$ , then either $x\in D$ or
$p\leq m-n$.

 {\rm (b)} $\underset{j\neq i}{\cap}A_{j}=D$  for all $\
i\in\left\{ 1,2,\dots,m\right\}$.

{\rm (c)} If  $\underset{i\in S}{\cap}A_{i}=D$  whenever
$\left|S\right|=n$, then $\left| \underset{i\in
T}{\cap}A_{i}:D\right| \leq m-n+1$ whenever $\left| T\right|
=n-1$ .

{\rm (d)} If $\Gamma$ is maximal and $U$ is an abelian minimal
normal subgroup of $G$. Then if $\left|\{i:U\subseteq
A_{i}\}\right|=n$, either $U\subseteq D$ or $\left|  U\right|
\leq m-n$.
\end{lem}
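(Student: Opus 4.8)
The plan is to derive all four parts from a single device furnished by irredundancy: for each index $i$ the set $\Gamma\setminus\{A_i\}$ fails to cover $G$, so there is a \emph{private element} $g_i\in A_i$ lying in no other member of $\Gamma$. Each statement will then follow by translating a suitable set by such a private element and observing that, after translation, every member of $\Gamma$ can absorb only a controlled number of the translated points; counting gives the bound. I would prove (b) first, since it is the cleanest instance. The inclusion $D\subseteq\bigcap_{j\ne i}A_j$ is immediate. For the reverse, I would take $x\in\bigcap_{j\ne i}A_j$ and the private element $g_i$: if $xg_i\in A_j$ for some $j\ne i$, then $g_i=x^{-1}(xg_i)\in A_j$ (using $x\in A_j$, as $j\ne i$), contradicting privacy; hence $xg_i$ avoids every $A_j$ with $j\ne i$, so the covering property forces $xg_i\in A_i$ and thus $x=(xg_i)g_i^{-1}\in A_i$, giving $x\in\bigcap_iA_i=D$.

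For part (a) I would assume $x\notin D$, set $J=\{i:x\in A_i\}$ with $|J|=n$, so $J^{c}=\{i:x\notin A_i\}\neq\emptyset$ and $|J^{c}|=m-n$, fix $i_0\in J^{c}$ with private element $g$, and examine the $p$ distinct elements $g,xg,\dots,x^{p-1}g$ (distinct because $x$ has $p$-power order $\ge p$). The aim is to show these are covered only by members indexed in $J^{c}$, each catching at most one. For $1\le s\le p-1$ one has $x^{s}g\notin A_{i_0}$, since $x^{s}g\in A_{i_0}$ would give $x^{s}\in A_{i_0}$ and, as $\gcd(s,p)=1$, also $x\in A_{i_0}$, contradicting $i_0\in J^{c}$; and for $j\in J$ we have $x\in A_j$, so $x^{s}g\in A_j\iff g\in A_j$, which fails by privacy. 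Finally, if $x^{s}g,x^{t}g\in A_j$ with $s\ne t$ then $x^{s-t}\in A_j$, and $0<|s-t|<p$ forces $x\in A_j$, i.e.\ $j\in J$. Hence the $p$ elements are distributed injectively among the members indexed by $J^{c}$, giving $p\le|J^{c}|=m-n$.

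For part (d) the preliminary structural step is the key point. Writing $J=\{i:U\subseteq A_i\}$ and assuming $U\not\subseteq D$, I would first show that $U\cap A_j=1$ for each $j\in J^{c}$: maximality of $A_j$ together with $U\not\subseteq A_j$ yields $UA_j=G$; since $U$ is abelian it normalizes $U\cap A_j$, and $A_j$ normalizes $U\cap A_j$ because $U\trianglelefteq G$, so $U\cap A_j\trianglelefteq UA_j=G$, and minimality of $U$ forces $U\cap A_j=1$. With this in hand the counting of part (a) applies verbatim to the set $\{ug:u\in U\}$ in place of $\{x^{s}g\}$: translating by a private element $g$ of some $A_{j_0}$ with $j_0\in J^{c}$, one checks that this set meets no $A_j$ with $j\in J$ and meets each $A_j$ with $j\in J^{c}$ in at most one point (using $U\cap A_j=1$), so $|U|\le|J^{c}|=m-n$.

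Part (c) is where I expect the real work. I would fix $T$ with $|T|=n-1$, set $H=\bigcap_{i\in T}A_i\supseteq D$, and note that applying the hypothesis to $T\cup\{j\}$ gives $H\cap A_j=D$ for every $j\in T^{c}$, where $|T^{c}|=m-n+1$. The crux is to pick a \emph{single} index $j_0\in T^{c}$ with private element $w$ (not a witness depending on $h$): for $h\in H$ and $i\in T$ one has $hw\in A_i\iff w\in A_i$, which fails by privacy, while $hw\in A_{j_0}\iff h\in H\cap A_{j_0}=D$. Thus for $h\in H\setminus D$ the element $hw$ avoids every $A_i$ with $i\in T\cup\{j_0\}$, so the covering property places it in some $A_j$ with $j\in T^{c}\setminus\{j_0\}$; and this assignment is well defined and injective on right cosets of $D$, because $hw,h'w\in A_j$ gives $hh'^{-1}=(hw)(h'w)^{-1}\in A_j\cap H=D$. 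Injecting the nonidentity $D$-cosets of $H$ into the $(m-n)$-element set $T^{c}\setminus\{j_0\}$ then yields $[H:D]-1\le m-n$, i.e.\ $[H:D]\le m-n+1$. The delicate steps are isolating one private element $w$ and recognizing that translation by it converts the global covering hypothesis into the desired injection of cosets; once the translate-by-$w$ idea is found, the remainder is forced.
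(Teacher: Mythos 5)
Your proof is correct. The paper gives no proof of this lemma at all — it is quoted verbatim from Lemma 2.2 of the cited Bryce–Fedri–Serena paper — and your argument (extract a private element $g_i\in A_i\setminus\bigcup_{j\ne i}A_j$ from irredundancy, translate the relevant set by it, and count how many translated points each member of $\Gamma$ can absorb, using $H\cap A_j=D$ resp.\ $U\cap A_j=1$ to get the "at most one coset/point per subgroup" bounds) is precisely the standard argument of that source, with all four parts handled correctly.
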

We now prove some key lemmas.
\begin{lem}\label{p} Let $G$ be a finite $p$-group having
a $\mathfrak{C}_n$-cover $\{M_i \;|\; i=1,\dots,n\}$. Then

{\rm (a)}  $p\leq n-1$.

{\rm (b)}  If $s$ is the integer such that    $1\leq s\leq n-2$
and $p=n-s$, then $\bigcap_{i\in S}M_i= 1$ for every subset $S$ of
$\{1,2,\dots,n\}$ with $|S|\geq s+1$.

{\rm (c)}  If $n=p+1$, then $G\cong (C_{p})^2$.
\end{lem}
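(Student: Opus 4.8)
The plan is to derive all three parts from Lemma \ref{lem2.2}(a) after one structural reduction. Since $G$ is a finite $p$-group, every maximal subgroup has index $p$ and is normal; hence the intersection $D=\bigcap_{i=1}^n M_i$ is a normal subgroup of $G$, so $D_G=D$. Because the cover is core-free, $D_G=1$, and therefore $D=1$. Thus the common intersection of the cover is trivial, and every nontrivial element of $G$ is a $p$-element lying outside $D$; this is exactly what makes Lemma \ref{lem2.2}(a) applicable (with the cover size playing the role of $m$ and $\nu:=|\{i:x\in M_i\}|$ playing the role of the counter in that lemma). For part (a), I would take any nontrivial $x\in G$. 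Since the $M_i$ cover $G$, $x$ lies in at least one member, so $\nu\ge 1$. As $x\notin D$, Lemma \ref{lem2.2}(a) forces $p\le n-\nu\le n-1$, which is the claim. (This also follows from Theorem \ref{paking}, but the direct argument is cleaner.)

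For part (b), write $p=n-s$ with $1\le s\le n-2$ and fix a subset $S$ with $|S|=s+1$; the case $|S|>s+1$ then follows at once because $\bigcap_{i\in S}M_i\subseteq\bigcap_{i\in S'}M_i$ for any $S'\subseteq S$ of size $s+1$. Suppose for contradiction that $\bigcap_{i\in S}M_i\ne 1$ and pick a nontrivial $x$ in it. Then $x$ is a $p$-element with $\nu\ge|S|=s+1$ and $x\notin D$, so Lemma \ref{lem2.2}(a) gives $p\le n-\nu\le n-(s+1)=p-1$, a contradiction. Hence $\bigcap_{i\in S}M_i=1$.

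For part (c), I would observe that $n=p+1$ is precisely the case $s=1$ of part (b) (and $n=p+1\ge 3$ since $p\ge 2$, so the hypothesis $1\le s\le n-2$ is met). Thus $M_i\cap M_j=1$ for all $i\ne j$, i.e. the $n=p+1$ maximal subgroups meet pairwise only in the identity. Writing $|G|=p^k$, each $M_i$ has order $p^{k-1}$, and since the sets $M_i\setminus\{1\}$ are pairwise disjoint and together cover $G\setminus\{1\}$, counting gives $p^k=1+(p+1)(p^{k-1}-1)$, which simplifies to $p^{k-1}=p$ and hence $k=2$. So $|G|=p^2$ and $G$ is abelian. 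Finally, $C_{p^2}$ is cyclic and so cannot be written as a union of proper subgroups, whence $G\cong (C_p)^2$. The step I expect to require the most care is this last one: verifying that the counting identity is an exact equality rather than merely an inequality, which relies precisely on the pairwise-trivial intersections supplied by part (b), and then excluding the cyclic group of order $p^2$.
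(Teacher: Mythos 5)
Your proof is correct, and while part (a) coincides with the paper's argument, parts (b) and (c) follow a genuinely different route. For (b) the paper does not re-use Lemma \ref{lem2.2}(a); instead it observes that $N=\bigcap_{i\in S}M_i$ is normal (maximal subgroups of a $p$-group being normal) and invokes Lemma 3.2 of Tomkinson \cite{Tom2} applied to the decomposition $G=(\bigcup_{i\in S}M_i)\cup(\bigcup_{j\notin S}M_j)$ to get $p\le n-s-1$. Your version --- pick a nontrivial $x\in\bigcap_{i\in S}M_i$, note $x\notin D=1$ and $\nu\ge s+1$, and read off $p\le n-\nu\le p-1$ from Lemma \ref{lem2.2}(a) --- reaches the same contradiction while staying entirely inside the already-quoted lemma, so it is more self-contained; the preliminary observation that $D=D_G=1$ because all $M_i$ are normal is exactly the justification the paper leaves implicit, and you are right to spell it out. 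For (c) the paper first applies Proposition \ref{p12} to conclude that $G$ is elementary abelian, and then uses only two members of the cover: $M_1\cap M_2=1$ by part (b), hence $|G|=|G:M_1\cap M_2|=p^2$ and $G\cong(C_p)^2$. Your inclusion--exclusion count over all $p+1$ subgroups, $p^k-1=(p+1)(p^{k-1}-1)$, followed by the exclusion of $C_{p^2}$ (a cyclic group is never a union of proper subgroups), is longer but avoids Proposition \ref{p12} altogether and so gives a purely elementary derivation of $|G|=p^2$. Both routes are sound; the paper's is shorter where it can lean on Proposition \ref{p12} and on \cite{Tom2}, yours trades those external inputs for slightly more counting.
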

\begin{proof}
Any blocking set $B$ of $PG(d,q)$ has at least $q+1$ points, and
equality holds if and only if $B$ is a line of $PG(d,q)$. This
corresponds, for $q=p$ prime, to points (a) and (c) of Lemma
\ref{p} (see also \cite[Proposition 2.5]{paking}). We give here a
group-theoretic proof for the points (a) and (c).

(a) Let $x$ be a $p$-element in $G$. Then by Lemma \ref{lem2.2}
(a), we have $p\leq n-m$, where $m=|\{i :  x \in M_{i}\}|$.
Therefore $p\leq n-1$.

 (b)  Let  $S\subseteq \{1,2,\dots,n\}$
with  $|S|=s+1$; and let  $N:= \bigcap_{i\in S}M_i$. Then $N
\unlhd G$, since $M_i \unlhd G$. Now suppose, for a contradiction,
that $N \neq 1$. Since $G=(\bigcup_{i\in
S}M_i)\bigcup(\bigcup_{j\notin S}M_j)$ and $|G:M_{k}|$ =$p$ for
every $k\in\{1,\dots,n\}$, by Lemma 3.2 of \cite{Tom2}, we have
$p\leq n-s-1$. This contradiction completes the proof of part (b).

 (c)  By Proposition \ref{p12},  $G$ is a finite  elementary
 abelian  $p$-group and by part (b),  $M_1 \cap M_2=1$. Thus $|G|=|G:M_1\cap M_2|=p^{2}$ and so $G\cong (C_{p})^2$.
 \end{proof}
 \begin{lem}\label{lem2.5}  Let $G=(C_{p})^{d}$ ($d\geq 2$ and
$p$ is a prime number) and suppose that $G$ has a
$\mathfrak{C}_n$-cover $\{M_i \;|\; i=1,\dots,n\}$. Let
  $T\subseteq \{1,2,\dots,n\}$.\\
{\rm (a)}   If $|T|=n-p$, then $|\bigcap_{i\in T}M_i|=1$ or $p$.\\
{\rm (b)} If $|T|=2$, then $|\bigcap_{i\in T}M_i|=p^{d-2}$.\\
{\rm (c)}  $\bigcap_{i\in T}M_i=1$ for some $T$ of size $d$.\\
{\rm (d)} If $\bigcap_{i\in S} M_i=1$ whenever $|S|=d$ then $p\leq
|\bigcap_{i\in T} M_i|\leq n-d+1$ whenever $|T|=d-1$.
\end{lem}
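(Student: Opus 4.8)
The plan is to translate everything into the vector-space language underlying the correspondence of Propositions \ref{b1}--\ref{p2}. Since $G\cong(\mathbb{F}_p)^d$ is elementary abelian and each $M_i$ is a maximal subgroup, each $M_i$ is a codimension-one subspace (a hyperplane) of the $d$-dimensional $\mathbb{F}_p$-vector space $G$, and the $M_i$ are pairwise distinct because $\mathcal{C}$ is a set. For any $T$ the subgroup $\bigcap_{i\in T}M_i$ is again a subspace, and I will compute its order by repeatedly using the standard codimension inequality $\dim(U\cap W)\geq \dim U+\dim W-d$.

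For (b), if $T=\{i,j\}$ with $i\neq j$ then $M_i\neq M_j$, so $M_i+M_j=G$ and $\dim(M_i\cap M_j)=(d-1)+(d-1)-d=d-2$, giving $|M_i\cap M_j|=p^{d-2}$. For (c), core-freeness gives $\bigcap_{i=1}^n M_i=1$, so the associated linear functionals $b_i$ span the $d$-dimensional dual space; choosing $d$ of them that form a basis yields a set $T$ with $|T|=d$ and $\bigcap_{i\in T}M_i=1$. (Equivalently, via Proposition \ref{p2} the blocking set has $d(B)=d-1$, so its points span $PG(d-1,p)$.)

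The hard part will be (a), and its only nonroutine step is combining the covering bound with a codimension count. The key input is Lemma \ref{lem2.2}(a): since every element of $G$ is a $p$-element and $D=1$, every nontrivial $x\in G$ lies in at most $n-p$ of the $M_i$. Now fix $T$ with $|T|=n-p$ and put $N=\bigcap_{i\in T}M_i$; if $N=1$ we are done, so assume $N\neq 1$ and take $1\neq x\in N$. Then $x$ already lies in the $n-p$ subgroups indexed by $T$, so by the bound it lies in no $M_j$ with $j\notin T$; hence $N\cap M_j=1$ for every $j\notin T$, and such a $j$ exists because $|T^c|=p\geq 1$. Since $M_j$ has codimension one, $\dim(N\cap M_j)\geq \dim N-1$, so $\dim N-1\leq 0$ and $\dim N\leq 1$, forcing $|N|\in\{1,p\}$.

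Finally, for (d) I would treat the two inequalities separately. The lower bound is immediate: an intersection of $d-1$ hyperplanes has dimension at least $d-(d-1)=1$, so $|\bigcap_{i\in T}M_i|\geq p$. For the upper bound I would invoke Lemma \ref{lem2.2}(c) directly with $D=1$, taking the lemma's $m$ to be our $n$ and the lemma's $n$ to be our $d$: the hypothesis $\bigcap_{i\in S}M_i=1$ for all $|S|=d$ then yields $|\bigcap_{i\in T}M_i|\leq n-d+1$ for all $|T|=d-1$, which is exactly the claim.
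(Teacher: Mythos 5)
Your proof is correct. Parts (b), (c) and (d) follow essentially the same route as the paper: (b) is the same codimension count, (c) is the paper's rank argument (Proposition \ref{p4} applied to the blocking matrix) rephrased as the dual functionals spanning the dual space, and (d) combines the same trivial lower bound with the same application of Lemma \ref{lem2.2}(c) with $D=1$. The only genuine divergence is in part (a). The paper first invokes Lemma \ref{p}(b) (whose proof rests on Lemma 3.2 of Tomkinson's paper) to conclude that $\bigcap_{i\in K}M_i=1$ for every $K$ of size $n-p+1$, and then applies Lemma \ref{lem2.2}(c) to get $|\bigcap_{i\in T}M_i:\bigcap_{i\in K}M_i|\leq p$. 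You instead apply Lemma \ref{lem2.2}(a) directly: since the cover is core-free and $G$ is abelian, $D=1$, so every nontrivial element lies in at most $n-p$ of the $M_i$; hence a nontrivial $N=\bigcap_{i\in T}M_i$ with $|T|=n-p$ meets every $M_j$ with $j\notin T$ trivially, and intersecting $N$ with one such hyperplane forces $\dim N\leq 1$. Your version is self-contained modulo Lemma \ref{lem2.2}(a) and avoids the detour through Lemma \ref{p}(b); the paper's version has the minor advantage of also recording the intermediate fact that all $(n-p+1)$-fold intersections are trivial, which it reuses in the later counting arguments. Both arguments are valid.
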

\begin{proof}  (a) By Lemma \ref{p}(b) $\bigcap_{i\in K}M_i=1$ for every subset $K$
of  $ \{1,2,\dots,n\}$ such that   $|K|= n-p+1$. Now  by Lemma
\ref{lem2.2} (c),
 $|\bigcap_{i\in T}M_i:\bigcap_{j\in K}M_j|\leq p$ and since $G$ is
 a $p$-group,  $|\bigcap_{i\in T}M_i|=1$ or $p$.

 (b)  Since each $M_j$ is a maximal subgroup of $G$, $|G:M_{j}|=p$.  Therefore $|G:\bigcap_{i\in T}M_i|=p^{2}$,
 and so $|\bigcap_{i\in T}M_i|=p^{d-2}$.

(c) Suppose that $M_i=M_{b_i}$, where $b_i \in PG(d-1,p)$ for
every $i\in\{1,\dots,n\}$. Then by Proposition \ref{p12},
$B=\{b_i \;|\; i=1,\dots,n\}$ is a minimal blocking set
  of size $n$ in $PG(d-1,p)$ such that $d(B)=d-1$. By Proposition
  \ref{p4}, $d=rank(A)$, where $A$ is any blocking matrix of $B$.
  Therefore there exists a subset $T\subseteq \{1,\dots,n\}$ such that $|T|=d$ and
   $\{b_i \;|\; i\in T\}$ is linearly independent. This implies that $\bigcap_{i\in T} M_i=1$, as required.

   (d) Since $|G|=p^d$, $|\bigcap_{i\in T} M_i|\geq p$ for all $T$
   with $|T|=d-1$. Now Lemma \ref{lem2.2}(c)   completes the proof.
\end{proof}
\section{\bf $p$-groups  having a $\mathfrak{C}_n$-cover for $n\in\{7,8,9\}$ }
 In this section we characterize all $p$-groups  having a
 $\mathfrak{C}_n$-cover  for $n=7,8$ and $9$. We denote by $[n]$
 the set $\{1,\dots,n\}$ and the set of all subsets of $[n]$ of
 size $m$ will be  denoted by $[n]^m$. We use the following
 results derived from the theory of blocking sets.
\begin{rem}\label{remi}
A minimal blocking set of $PG(2,q)$ has at most $q\sqrt{q}+1$
points \cite[Theorem 1 (i)]{BT}. From this it follows that  if
$(C_p)^3$ has a $\mathfrak{C}_n$-cover  then $n\leq p\sqrt{p}+1$.
\end{rem}
\begin{rem}\label{remii}
A minimal blocking set of $PG(3,p)$ with $p>3$ prime of size at
most $3(p + 1)/2 + 1$ is contained in a plane \cite[Theorem
1.4]{He}. This implies the non-existence of a
$\mathfrak{C}_9$-cover for $(C_5)^4$.
\end{rem}
\begin{rem}\label{remiii}
A minimal blocking set $B$ of $PG(3,q)$ has at most $q^2+1$ points
and equality holds if and only if $B$ is an ovoid \cite[Theorem 1
(ii)]{BT}. Also in \cite{MS} it has been proven that minimal
blocking sets of size $q^2$ in $PG(3,q)$ do not exist. Therefore
 there is no $\mathfrak{C}_9$-cover for $(C_3)^4$.
\end{rem}
 \begin{lem}\label{81} Let $G$ be a $3$-group. Then
$G$ is a $\mathfrak{C}_7$-group, if and only if  $G\cong (C_3)^4$.
\end{lem}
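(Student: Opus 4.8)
The plan is to reduce the whole statement to Proposition \ref{p12} and then pin down exactly which dimensions can occur. By Proposition \ref{p12}, a $3$-group $G$ admits a $\mathfrak{C}_7$-cover if and only if $G\cong (C_3)^{m+1}$ for some positive integer $m$ such that $PG(m,3)$ carries a minimal blocking set $B$ with $d(B)=m$ and $|B|=7$. Write $\{M_{b_i}\mid i=1,\dots,7\}$ for the associated cover and $b_1,\dots,b_7$ for the points of $B$, regarded as a spanning set of $\mathbb{F}_3^{m+1}$. Thus it suffices to show that such a configuration exists precisely when $m=3$.

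First I would bound $m$ from above. Since $M_{b}=\{x:b\cdot x=0\}$, the intersection $\bigcap_{i\in S}M_{b_i}$ is the annihilator (orthogonal complement) of $\langle b_i : i\in S\rangle$, hence is trivial exactly when $\{b_i : i\in S\}$ spans $\mathbb{F}_3^{m+1}$. Applying Lemma \ref{p}(b) with $p=3$, $n=7$, $s=n-p=4$ (note $1\le 4\le n-2$) gives $\bigcap_{i\in S}M_{b_i}=1$ for every $S$ with $|S|=5$; so every five of the $b_i$ span $\mathbb{F}_3^{m+1}$, forcing $m+1\le 5$, i.e. $m\le 4$. The value $m=1$ is impossible since $PG(1,3)$ has only $4$ points, and $m=2$ is excluded by Remark \ref{remi}, because a minimal blocking set of $PG(2,3)$ has at most $3\sqrt3+1<7$ points. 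This leaves $m\in\{3,4\}$.

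The crux of the upper bound is ruling out $m=4$. Then the seven points lie in $PG(4,3)$ and, by the previous paragraph, every five of them are linearly independent in $\mathbb{F}_3^5$. After a change of basis I may take $b_1,\dots,b_5$ to be the standard basis; independence of every quintuple formed by $b_6$ (resp. $b_7$) and four basis vectors forces all coordinates of $b_6,b_7$ to be nonzero, and independence of the quintuples made of three basis vectors together with $b_6,b_7$ forces every $2\times2$ minor of the matrix with rows $b_6,b_7$ to be nonzero. Hence the five columns $(b_{6,j},b_{7,j})$, $j=1,\dots,5$, are pairwise independent vectors of $\mathbb{F}_3^2$ with both entries nonzero; but $PG(1,3)$ has only two points with both coordinates nonzero, so five pairwise distinct such points cannot exist. (Equivalently, this would be a $7$-arc in $PG(4,3)$, i.e. a $[7,5]$ MDS code over $\mathbb{F}_3$, whose dual would consist of seven distinct points of $PG(1,3)$.) This contradiction eliminates $m=4$ and yields $G\cong(C_3)^4$ in the forward direction.

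For the converse I would exhibit an explicit minimal blocking set of size $7$ with $d(B)=3$ in $PG(3,3)$ and invoke Proposition \ref{p12}. A convenient construction takes a line $\ell$, keeps three of its four points $p_1,p_2,p_3$, and adjoins four points $p_4,\dots,p_7$ off $\ell$ whose projections from the omitted fourth point $p_0\in\ell$ form the pencil of all four lines through a common point of the quotient plane $PG(2,3)$. Every plane of $PG(3,3)$ meets $\ell$: those meeting $\ell$ in $p_1,p_2,p_3$ or containing $\ell$ are blocked by $p_1,p_2,p_3$, while the planes through $p_0$ not containing $\ell$ are blocked by $p_4,\dots,p_7$ since the pencil covers the associated affine plane. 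One then checks that the pencil being irredundant, together with a suitable choice of the free coordinates, supplies property (b) of Proposition \ref{p4} for all seven points; concretely, the set $\{(0,0,1,0),(0,0,1,1),(0,0,1,2),(1,0,0,0),(0,1,0,0),(1,2,0,0),(1,1,0,1)\}$ works and realizes the extremal size $2q+1=7$. The main obstacles are exactly these two endpoints: the clean non-existence argument at $m=4$, and the verification of minimality (property (b)) for the explicit $7$-point set.
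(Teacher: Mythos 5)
Your proof is correct, but it reaches the two hard endpoints by a genuinely different route than the paper. For the crucial non-existence step at $|G|=3^5$, the paper pins down every intersection size ($|M_i\cap M_j|=27$, then $|\bigcap_{i\in S}M_i|=9$ for $|S|=3$, $=3$ for $|S|=4$, $=1$ for $|S|\geq 5$) and derives a contradiction from inclusion--exclusion ($|\bigcup_i M_i|=225\neq 243$). You instead extract from Lemma \ref{p}(b) only the statement that every five of the seven points span $\mathbb{F}_3^5$, i.e.\ that the points form a $7$-arc in $PG(4,3)$, and kill this with a short linear-algebra/MDS-duality argument: normalizing $b_1,\dots,b_5$ to the standard basis forces the five columns of $\begin{bmatrix} b_6 \\ b_7\end{bmatrix}$ to be pairwise independent vectors of $\mathbb{F}_3^2$, which is impossible since $PG(1,3)$ has only four points. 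This is cleaner and more elementary than the paper's count, and it isolates exactly which consequence of irredundance is being used. For the existence step at $|G|=3^4$, the paper writes down an explicit $\mathfrak{C}_7$-cover of $(C_3)^4$ by subgroups and certifies it with {\sf GAP}; you write down the equivalent object on the other side of Proposition \ref{p12}, namely an explicit $7$-point minimal blocking set of $PG(3,3)$ spanning the space (a cone-type example of size $2q+1$, of the kind the paper itself points to in Remark 4.9 via Tallini's construction) --- your coordinate set does check out, including property (b) of Proposition \ref{p4} for all seven points. Two small cosmetic remarks: your geometric gloss on the construction is slightly off (the projections of $p_4,\dots,p_7$ from $p_0$ are the four \emph{points} of a line of the quotient plane avoiding the image of $\ell$, not a pencil of lines through a point), and both your construction and the paper's still require a finite verification of irredundance, so neither approach avoids a hands-on check at that step.
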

\begin{proof}
 Suppose that $G$ is a 3-group having a $\mathfrak{C}_7$-cover
 $\{M_i \;|\; i\in [7]\}$.
 By Proposition \ref{p12}, $G$ is an elementary abelian 3-group.
 By Lemma \ref{p}(b), $|G|\leq 3^5$.

 Since an elementary abelian group of order 9 has only four maximal subgroups, we
have $|G|\geq 27$.  From Remark \ref{remi}, it follows that
$\left|G\right|\neq 27$.

 Assume that $|G|=3^4$ so that $G\cong (C_3)^4$.
 Now  it is easy to check  (e.g. by {\sf GAP} \cite{GAP}) that  if $G=\left<a,b,c,d\right>$,
  then the set
  \begin{align*} \mathcal{C}=\{\left< a,b,c\right>,\left< a,c,d\right>,\left<b,c,d \right>,
   \left< a,b,d \right>,\left< a,b,c^{-1}d\right>,\left<a^{-1}b,c,d
  \right>,&\\ \left< ad,a^{-1}c,ab \right>\}
\end{align*}
   of maximal subgroups forms a $\mathfrak{C}_7$-cover for $G$.

 Now let $\left| G\right| =3^5$. Then  by Lemma \ref{lem2.5}(b),  $$\left|M_{i}\cap
M_{j}\right|=27 \;\; \text{for all distinct} \;\; i,j\in [7].
\eqno{(1)}$$ Since $|G|=3^5$, there is no subset $S\in [7]^3$ such
that $|\bigcap_{i\in S}M_i|\leq 3$. Thus $|\bigcap_{i\in S}M_i|\in
\{9,27\}$. Suppose, for a contradiction, that there exists  $L\in
[7]^3$ such that $|\bigcap_{i\in L}M_i|=27 \;\;\; (*).$ Let $L'\in
[7]^2$ such that $L' \cap L=\varnothing$. Thus by Lemma
\ref{p}(b), we have $|\bigcap_{i\in L\cup L'} M_i|=1$. Now if
$L''\subset L$ such that $|L''|=2$, then $(1)$ and $(*)$ imply
that $|\bigcap_{i\in L'' \cup L'} M_i|=1$. Since $|L'' \cup
L'|=4$, it follows that $|G|\leq 3^4$, which is a contradiction.
Therefore $|\bigcap_{i\in S}M_i|= 9$ for every $S\in [7]^3$ and
so we can apply  point (d) of  Lemma \ref{lem2.5} to  get that
$|\bigcap_{i\in T}M_i|=3$ for every $T\in [7]^4$. Now it follows
from Lemma \ref{p}(b) that $\bigcap_{i\in K} M_i=1$ for all
$K\subseteq [7]$ with $|K|\geq 5$. Now the inclusion-exclusion
principle implies that $|\bigcup^7_{i=1}M_{i}|=225$, which is
impossible. This completes the proof.
\end{proof}
\noindent{\bf Proof of Theorem \ref{p-group7}.} Let $G$ be a
$p$-group having a $\mathfrak{C}_7$-cover
 $\{M_i \;|\; i\in [7]\}$. By Proposition  \ref{p12},  $G$ is an elementary
abelian $p$-group. Now Theorem \ref{paking} implies that  $p=2$ or
$3$. If $p=2$, then it follows from Theorem \ref{application}
that $G\cong (C_2)^6$. If $p=3$, then Lemma \ref{81} implies
that  $G\cong (C_3)^4$, and the proof is  complete. \hfill $\Box$\\

\noindent{\bf Proof of Theorem \ref{minimal}(a).} Consider the
$\mathfrak{C}_7$-cover $\mathcal{C}$ for $(C_3)^4$ obtained  in
Lemma \ref{81} and   the set
\begin{align*}B=\{(1,0,0,0), (0,1,0,0), (0,0,1,0), (0,0,0,1),&\\
(1,1,0,0), (0,0,1,1),(1,-1,1,-1)\}
\end{align*} in $(F_3)^4$. It is easy to check
 (e.g. by {\sf GAP} \cite{GAP}) that  $\mathcal{C}=\{M_b \;|\; b\in B\}$.
 Now Propositions \ref{b1} and \ref{p3} imply that $B$ is a  minimal blocking set of size 7 in
$PG(3,3)$. \hfill $\Box$\\
\begin{lem}\label{C35}
The group $(C_3)^5$ has no $\mathfrak{C}_8$-cover.
\end{lem}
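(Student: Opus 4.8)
The plan is to pass to blocking sets and eliminate the resulting configuration by a counting identity followed by a geometric analysis, in the spirit of Lemma~\ref{81}. By Proposition~\ref{p12} it suffices to show that $PG(4,3)$ carries no minimal blocking set $B=\{b_1,\dots,b_8\}$ with $d(B)=4$; put $M_i=M_{b_i}$. Lemma~\ref{lem2.5}(b) gives $|M_i\cap M_j|=27$ for $i\neq j$, and Lemma~\ref{p}(b) (with $p=3$, $n=8$, $s=5$) gives that any six of the $M_i$ meet trivially, i.e. any six of the $b_i$ span $PG(4,3)$ and no hyperplane contains six of them. In particular at most three $b_i$ are collinear (four collinear points together with two more would be six points of rank $\le 4$), and by Lemma~\ref{lem2.5}(a) any five of the $b_i$ have rank $4$ or $5$.

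I would first clear away the generic case. If every five of the $b_i$ are independent, then Lemma~\ref{lem2.5}(d) forces every four to be independent as well, so the orders of the intersections of $1,2,3,4,5,\ge 6$ of the $M_i$ are $81,27,9,3,1,1$, and inclusion--exclusion gives
$$\Big|\bigcup_{i=1}^{8}M_i\Big|=8\cdot 81-28\cdot 27+56\cdot 9-70\cdot 3+56-28+8-1=221\neq 243,$$
contradicting that the $M_i$ cover $G$. Hence some five of the points are dependent, so there is a hyperplane $H\cong PG(3,3)$ containing exactly five of them, say $b_1,\dots,b_5$ (exactly five, as no hyperplane contains six), which therefore span $H$, while $b_6,b_7,b_8\notin H$.

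I would then show these five points cannot block $H$. Letting $P_j$ be the number of planes of $H$ meeting $\{b_1,\dots,b_5\}$ in exactly $j$ points and $t$ the number of lines carrying three of them, the counts $\sum_j P_j=40$, $\sum_j jP_j=65$, $\sum_j\binom{j}{2}P_j=40$ and $\sum_j\binom{j}{3}P_j=10+3t$ force $P_4=3t-5$; a blocking configuration would need $P_0=0$, hence $t\ge 2$, giving two concurrent trisecants and making the five points coplanar, impossible since they span $H$. Thus some plane $\pi\subset H$ avoids $b_1,\dots,b_5$. The four hyperplanes of $PG(4,3)$ through $\pi$ are $H,H_1,H_2,H_3$; none of $b_1,\dots,b_5$ lies on $H_1,H_2,H_3$ (because $H\cap H_k=\pi$), so the blocking property forces $\langle\pi,b_6\rangle,\langle\pi,b_7\rangle,\langle\pi,b_8\rangle$ to be the three distinct hyperplanes $H_1,H_2,H_3$. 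Writing $P_{kl}=\langle b_k,b_l\rangle\cap H$ for $\{k,l\}\subset\{6,7,8\}$, distinctness is equivalent to $P_{kl}\notin\pi$; so no avoiding plane contains any $P_{kl}$, which says that the projection of $\{b_1,\dots,b_5\}$ from each $P_{kl}$ blocks $PG(2,3)$. As this projection has at most five points and the smallest nontrivial blocking set of $PG(2,3)$ has size six, it must contain a line, i.e. four of the $b_i$ must be coplanar with $P_{kl}$.

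The endgame, which I expect to be the main obstacle, is to derive a contradiction from this. When $b_1,\dots,b_5$ are in general position (no four coplanar) the requirement already fails, finishing the argument. It remains to treat the degenerate patterns permitted by the constraints above — a single coplanar quadruple, or one trisecant producing two coplanar quadruples — where the conditions force $P_{67},P_{68},P_{78}$ into a fixed plane $\tau\subset H$ (so that $\langle b_6,b_7,b_8\rangle\cap H\subset\tau$). This last step is a finite check over the few admissible configurations of the five points together with the positions of $b_6,b_7,b_8$, using the minimality conditions of Proposition~\ref{p4}(2); it can be carried out by hand or, as elsewhere in the paper, with {\sf GAP}.
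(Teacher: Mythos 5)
Your route is genuinely different from the paper's. The paper normalizes a blocking matrix via Theorem~\ref{thm1} so that the first five rows are $\mathbf{e}_1,\dots,\mathbf{e}_5$, observes that one further row must have zero coordinate sum (four candidates up to column permutation), and then runs an exhaustive {\sf GAP} search over all completions $\mathbf{y},\mathbf{z}$ to show no irredundant cover arises. You instead work geometrically: inclusion--exclusion (correctly yielding $221\neq 243$) kills the case where every five points are independent; the standard plane-count identity $P_0=P_4+5-3t$ in $H\cong PG(3,3)$, combined with $t\le 1$, produces a plane $\pi$ of $H$ avoiding $b_1,\dots,b_5$; and the pencil of hyperplanes through $\pi$ forces each projection of $\{b_1,\dots,b_5\}$ from $P_{kl}$ to block $PG(2,3)$, hence to contain a line. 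All of this checks out (the only unaddressed edge cases, $P_{kl}\in\{b_1,\dots,b_5\}$ and $b_6,b_7,b_8$ collinear, are easily disposed of via the ``no six in a hyperplane'' and ``no four collinear'' constraints), and it eliminates the general-position configuration by hand, which is more than the paper does without a computer.

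The genuine gap is the endgame you yourself flag. For the degenerate configurations (a coplanar quadruple, or a trisecant and its attendant coplanar quadruples) you assert that the constraints force $P_{67},P_{68},P_{78}$ into a fixed plane $\tau\subset H$ and that ``a finite check'' then yields a contradiction, but no contradiction is actually derived: it is not shown that $\langle b_6,b_7,b_8\rangle\cap H\subseteq\tau$ is incompatible with minimality, nor is the list of admissible configurations enumerated or the check performed. Since the lemma is a pure nonexistence statement, its entire content lies in verifying that \emph{every} candidate configuration fails; as written, your argument proves only that a counterexample would have to be one of these degenerate configurations. This is precisely the regime where the paper found it necessary to resort to an exhaustive computation, so the deferred step cannot be presumed routine. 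To complete the proof you would need either to carry out the case analysis explicitly (e.g., using Proposition~\ref{p4}(2) to exploit the ``private hyperplane'' of each $b_i$ in the degenerate configurations) or to substitute a concrete, verifiable computation as the paper does.
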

\begin{proof}
Suppose, for a contradiction, that $(C_3)^5$ has a
$\mathfrak{C}_8$-cover $\{K_1,\dots,K_8\}$, where $K_i=M_{b_i}$,
$b_i\in (\mathbb{F}_3)^5$. Then by Propositions \ref{p3} and
\ref{p2}, $B=\{b_i \;|\; i=1,\dots,8\}$ is a minimal blocking set
for $PG(4,3)$ with $d(B)=4$. Now it follows from Theorem
\ref{thm1} that a blocking matrix of $B$ is equivalent to a
matrix as follows $$A=\begin{bmatrix}
  {\mathbf e}_1 & {\mathbf e}_2 & {\mathbf e}_3 & {\mathbf e}_4 & {\mathbf e}_5 & {\mathbf x}_1 & {\mathbf x}_2  &
  {\mathbf x}_3
\end{bmatrix}^T,$$
where ${\mathbf e}_i$ is the vector in $({\mathbb F}_3)^5$ whose
$i$-th entry is $1$ and the others are zero. Since the matrix
$$A\begin{bmatrix}
  1 & 1 & 1 & 1 & 1
\end{bmatrix}^T$$
must have at least one zero entry, $${\mathbf x}_i\begin{bmatrix}
  1 & 1 & 1 & 1 & 1
\end{bmatrix}^T=[0], \eqno{(*)}$$
for some $i\in\{1,2,3\}$. By permuting the rows ${\mathbf
x}_1,{\mathbf x}_2,{\mathbf x}_3$, if necessary, we may assume
that $i=1$. Now $(*)$ implies that the sum of entries of
${\mathbf x}_1$ is zero. It follows that there are, up to column
permutations, only the following 4 vectors in $({\mathbb F}_3)^5$
with the latter property:
$$[-1,1,1,1,1],[0,1,-1,1,-1],[0,0,1,1,1],[0,0,0,1,-1].\eqno{(\#)}$$
These column permutations will not change the {\em set} of the top
5 rows
 of the blocking matrix $A$, as they are the rows of the $5\times 5$ identity
 matrix. Therefore $PG(4,3)$ has a minimal blocking set
 $$\{{\mathbf e}_1, {\mathbf e}_2, {\mathbf e}_3, {\mathbf e}_4, {\mathbf e}_5 ,\mathbf{x},\mathbf{y},\mathbf{z}\},$$ where
$\mathbf{x}$ is one of the vectors in $(\#)$ and ${\mathbf
y},\mathbf{z}\in(\mathbb{F}_3)^5$. Now Propositions \ref{p3} and
\ref{p2} imply that the maximal subgroups
$$M_1=M_{\mathbf{e}_1},M_2=M_{\mathbf{e}_2},M_3=M_{\mathbf{e}_3},M_4=M_{\mathbf{e}_4},
M_5=M_{\mathbf{e}_5},M_6=M_{\mathbf{x}},M_7=M_{\mathbf{y}},M_8=M_{\mathbf{z}}
\eqno{(**)}
$$ forms a $\mathfrak{C}_8$-cover of $(C_3)^5$. It is not hard to show (e.g., by {\sf GAP}
\cite{GAP}) that for every choice of the vector $\mathbf{x}$ from
$(\#)$ and for all non-zero vectors $\mathbf{y},\mathbf{z}\in
(\mathbb{F}_3)^5$, $(**)$ is not an irredundant cover, a
contradiction. This completes the proof.

We give here the proof of the latter claim when
$\mathbf{x}=[-1,1,1,1,1]$. Firstly we determine the vectors
${\mathbf e}_1,\dots, {\mathbf e}_5$ and the group $(C_3)^5$ in
{\sf GAP} as the following permutations and the group:
\begin{verbatim}
a:=(1,2,3);;b:=(4,5,6);;c:=(7,8,9);;d:=(10,11,12);;
e:=(13,14,15);; C35:=Group(a,b,c,d,e);;
\end{verbatim}
This means that we have considered the permutations
\verb+a,b,c,d,e+ instead of the vectors ${\mathbf e}_1,\dots,
{\mathbf e}_5$, respectively and so, for example, the vector
$\mathbf{x}$ is corresponded to the permutation
\verb+a^-1*b*c*d*e+ (written in {\sf GAP} command form). Now the
maximal subgroups $M_1,\dots,M_6$ are as follows in {\sf GAP}:
\begin{verbatim}
M1:=Group(b,c,d,e);;M2:=Group(a,c,d,e);;
M3:=Group(a,b,d,e);;M4:=Group(a,b,c,e);;M5:=Group(a,b,c,d);;
M6:=Group(a*b,b*c^-1,c*d^-1,d*e^-1);;
\end{verbatim}
We now produce all unordered pairs $\{M_7,M_8\}$ of maximal
subgroups
 of $(C_3)^5$ such that $$\{M_1,\dots, M_6, M_7,
M_8\}$$ is an $8$-cover of $(C_3)^5$.
\begin{verbatim}
T:=MaximalSubgroups(C35);; D:=Difference(T,[M1,M2,M3,M4,M5,M]);;
C:=Combinations(D,2);; K:=Union(M1,M2,M3,M4,M5,M);;
F:=Filtered(C,i->Size(Union(K,Union(i)))=3^5);;
B:=List(F,i->Union([M1,M2,M3,M4,M5,M],i));;
\end{verbatim}
Therefore the set \verb+B+ contains all $8$-covers of $(C_3)^8$
which contain $M_1,\dots,M_6$. It remains to check whether there
is  an {\em irredundant} cover of $B$ or not. The following
program collect all irredundant  covers from \verb+B+ into the
set \verb+R+ (if there is any).
\begin{verbatim}
R:=[ ]; for i in [1..Size(B)] do Q:=Combinations(B[i],7); if (3^5
in List(Q,i->Size(Union(i))))=false then Add(R,B[i]); fi; od;
\end{verbatim}
Finally we see that the set \verb+R+ is empty for this choice of
the vector $\mathbf{x}$. Similarly, for the other selections, we
get that \verb+R+ is empty. This proves the claim.
\end{proof}
\begin{lem}\label{cover81} Let $G$ be a $3$-group. Then
$G$ is a $\mathfrak{C}_8$-group, if and only if  $G\cong (C_3)^4$.
\end{lem}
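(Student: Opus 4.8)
The plan is to prove both implications, the backward one by an explicit construction and the forward one by using Proposition \ref{p12} to reduce to a short list of candidate dimensions, all but one of which are eliminated.

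For the backward direction I would exhibit a concrete $\mathfrak{C}_8$-cover of $(C_3)^4$, in the style of the cover produced in Lemma \ref{81}. Writing the group additively as $(\mathbb{F}_3)^4$, consider the eight points
$$\mathbf e_1,\ \mathbf e_2,\ \mathbf e_3,\ \mathbf e_4,\ (1,1,1,1),\ (1,1,-1,-1),\ (1,1,-1,1),\ (-1,1,1,1)$$
of $PG(3,3)$ and the associated maximal subgroups $M_b$. Since the first four points are the standard basis, $\langle B\rangle=PG(3,3)$ and $d(B)=3$; one then checks by a direct finite computation (or with {\sf GAP}) that every plane of $PG(3,3)$ contains one of these points and that each point lies on a plane avoiding the other seven. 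By Proposition \ref{p4} this makes $B$ a minimal blocking set of size $8$ with $d(B)=3$, so by Propositions \ref{p3}, \ref{p2} and Remark \ref{rem1} the subgroups $M_b$ form a $\mathfrak{C}_8$-cover. This is exactly the construction needed later for Theorem \ref{minimal}(b), so there is no circularity.

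For the forward direction, let the $3$-group $G$ have a $\mathfrak{C}_8$-cover. By Proposition \ref{p12}, $G\cong(C_3)^{m+1}$ and $PG(m,3)$ carries a minimal blocking set $B$ with $d(B)=m$ and $|B|=8$; the task is to show $m=3$. Applying Lemma \ref{p}(b) with $n=8$ and $p=3$ (so $s=5$) gives $\bigcap_{i\in S}M_i=1$ whenever $|S|\geq 6$, whence $3^{m+1}=|G|\leq 3^6$ and $m\leq 5$. The cases $m=1$ and $m=2$ are immediate, since $PG(1,3)$ has only $4$ points while by Remark \ref{remi} a minimal blocking set of $PG(2,3)$ has at most $3\sqrt{3}+1<8$ points; and $m=4$ is precisely Lemma \ref{C35}. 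Everything therefore hinges on ruling out $m=5$, which I expect to be the main obstacle: here no prior lemma applies and one must extract genuine geometric content from Lemma \ref{p}(b). In this case $|G|=3^6$, and the six-fold triviality says that any six of the dual vectors $b_1,\dots,b_8\in(\mathbb{F}_3)^6$ are linearly independent, an arc/MDS condition on eight points of $PG(5,3)$.

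To finish the $m=5$ case I would pass to the reduced form of Theorem \ref{thm1}: an $8\times 6$ matrix with top block $I_6$ and bottom block a $2\times 6$ matrix $L$. Since the operations (C1)--(R2) preserve linear independence of any set of rows, the ``any six rows independent'' property is inherited by this form. The six-subsets that drop one identity row and keep one row of $L$ force every entry of $L$ to be nonzero, while those that drop two identity rows and keep both rows of $L$ force every $2\times 2$ minor of $L$ to be nonzero. Reading the six columns of $L$ as points of $PG(1,3)$, these two conditions say the columns are pairwise distinct points with both coordinates nonzero; but $PG(1,3)$ has exactly the two such points $[1:1]$ and $[1:-1]$, so six distinct ones cannot exist. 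This contradiction eliminates $m=5$, forcing $m=3$ and hence $G\cong(C_3)^4$, as required.
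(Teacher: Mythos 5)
Your proof is correct, and while the overall skeleton matches the paper's (reduce to $G\cong(C_3)^{m+1}$ via Proposition \ref{p12}, bound $m\leq 5$ by Lemma \ref{p}(b), kill $m=1$ by counting points of $PG(1,3)$, $m=2$ by Remark \ref{remi}, and $m=4$ by Lemma \ref{C35}), you handle the decisive case $|G|=3^6$ in a genuinely different and arguably cleaner way. The paper first pins down \emph{all} higher intersection sizes --- showing $|\bigcap_{i\in K}M_i|=27$ for every triple $K$ by ruling out the value $81$, then $9$ for quadruples and $3$ for quintuples via Lemma \ref{lem2.5}(d) --- and derives a contradiction from inclusion--exclusion, since the union would then have $705\neq 3^6$ elements. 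You instead translate the single fact that $\bigcap_{i\in S}M_i=1$ for $|S|=6$ into the statement that any six of the eight dual vectors in $(\mathbb{F}_3)^6$ are linearly independent, push this through the normal form of Theorem \ref{thm1}, and observe that the resulting $2\times 6$ block $L$ would need six columns that are pairwise independent vectors of $(\mathbb{F}_3)^2$ with no zero entries --- impossible, as $PG(1,3)$ has only the two points $[1:1]$ and $[1:-1]$ off the coordinate axes. This is essentially the Singleton/MDS-type bound for an $[8,6]$ code over $\mathbb{F}_3$; it avoids the delicate elimination of large triple intersections and the final numerical check, at the cost of leaning on the matrix formalism rather than the group-theoretic counting used elsewhere in the paper. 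Your backward direction also uses a different explicit $8$-point set than the paper's cover $\mathcal{D}$, but the set $\{\mathbf e_1,\mathbf e_2,\mathbf e_3,\mathbf e_4,(1,1,1,1),(1,1,-1,-1),(1,1,-1,1),(-1,1,1,1)\}$ does check out as a minimal blocking set of $PG(3,3)$ spanning the space (every plane with all coordinates nonzero satisfies one of the four sign conditions, and each point admits a tangent plane), so it serves the same purpose; like the paper, you defer the finite verification to a computation.
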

\begin{proof}  Suppose that $G$ is a 3-group having a
$\mathfrak{C}_8$-cover $\{M_i \;|\; i\in [8]\}$.
 Proposition  \ref{p12} implies that $G$ is an elementary abelian 3-group.
By Lemma \ref{lem2.5}(b)  $$|G:M_{i}\cap M_{j}|=9 \; \text{for
all distinct}\;  i,j\in [8] \eqno{(I)}$$ and Lemma \ref{p}(b)
implies that $$\text{for every subset}\; T\subseteq [8] \;
\text{with} \; |T|\geq 6, \; \bigcap_{i\in T}M_i=1. \eqno{(II)}$$
It follows that $\left|G\right|\leq 3^6$. Since an elementary
abelian group of order 9 has only four maximal subgroups, we have
$\left| G\right|\geq 27$ and it follows from Remark \ref{remi}
that $\left|G\right|\neq 27$.

Assume that $|G|=3^4$ so that $G\cong (C_3)^4$.
 Now  it is easy to check  (e.g. by {\sf GAP} \cite{GAP}) that  if $G=\left<a,b,c,d\right>$,
  then the set
  \begin{align*} \mathcal{D}=\{\left< a,b,c\right>,  \left< a,c,d\right>,  \left<b,c,d \right>,
   \left< a,b,d \right>,   \left< a,b,c^{-1}d\right>, \left<a^{-1}b,c,d
  \right>,&\\  \left< d,ac,b \right>, \left< ad,c,a^2b
  \right>\}
\end{align*}
    of maximal subgroups forms a $\mathfrak{C}_8$-cover for $G$.

It follows from Lemma \ref{C35} that $|G|\not=3^5$.

 Now let $|G|=3^6$. Then   $(I)$ implies that
 for every $K\in [8]^3$ we have  $|\bigcap_{i\in K}M_i|=27$ or
$81$.

We now prove that $$|\bigcap_{i\in K}M_i|=27 \;\; \text{for all}
\; K \in [8]^3.\eqno{(III)}$$ Suppose, for a contradiction, that
there exists  $L\in [8]^3$ such that
$$|\bigcap_{i\in L}M_i|=81.\eqno{(*)}$$ Let $L'\in [8]^3$ such
that $L' \cap L=\varnothing$. Thus by $(II)$, we have
$|\bigcap_{i\in L\cup L'} M_i|=1$. Now if $L''\subset L$ such that
$|L''|=2$, then $(I)$ and $(*)$ imply that $|\bigcap_{i\in L''
\cup L'} M_i|=1$. Since $|L'' \cup L'|=5$, it follows that
$|G|\leq 3^5$, which is a contradiction.

Now $(III)$ yields that for   every   $W\in [8]^4$, we have
$|\bigcap_{i\in W}M_i|=9$ or $27$. By a similar argument as in the
latter paragraph, one can prove that
$$|\bigcap_{i\in W}M_i|=9  \;\; \text{for all} \;\; W \in [8]^4. \eqno{(IV)}$$

Since $G\cong(C_3)^6$ and $(II)$ holds, we can apply Lemma
\ref{lem2.5}(d) for the cover and so
$$|\bigcap_{i\in T}M_i|=3 \;\text{for all}\; T\in [8]^5.
\eqno{(V)}$$ Thus since $G=\bigcup_{i=1}^8 M_i$, it follows from
the inclusion-exclusion principle and the relations $(I)$,
$(II)$, $(III)$, $(IV)$, $(V)$ that $|G|=705$, which is
impossible. This completes the result.
\end{proof}

\noindent{\bf Proof of Theorem \ref{p-group8}.} Let $G$ be a
$p$-group having a $\mathfrak{C}_8$-cover $\{M_i \;|\; i\in
[8]\}$. By Proposition \ref{p12}, $G$ is an elementary abelian
$p$-group. Now Theorems \ref{paking} and \ref{application} imply
that $p=3$ or $p=7$. If $p=3$, then by Lemma \ref{cover81}, we
conclude that $G\cong (C_3)^4$. If $p=7$, then Lemma  \ref{p}(c)
yields that $G\cong (C_{7})^2$.

For the converse if $G\cong (C_{7})^2$, then $G$ is a
$\mathfrak{C}_8$-group, since it has exactly 8 maximal subgroups.
If $G\cong (C_3)^4$, then  Lemma \ref{cover81} completes the
proof. \hfill $\Box$\\

\noindent{\bf Proof of Theorem \ref{minimal}(b).} Consider the
$\mathfrak{C}_8$-cover $\mathcal{D}$ for $(C_3)^4$ obtained  in
Lemma \ref{cover81} and   the set \begin{align*} B=\{(1,0,0,0),
(0,1,0,0), (0,0,1,0), (0,0,0,1),&\\(1,1,0,0), (0,0,1,1),
(2,0,1,0),(1,1,0,2)\}.
\end{align*}
It is easy to check
 (e.g. by {\sf GAP} \cite{GAP}) that  $\mathcal{D}=\{M_b \;|\; b\in B\}$.
 Now Propositions \ref{b1} and \ref{p3} imply that $B$ is a  minimal blocking set of size 8 in
$PG(3,3)$. \hfill $\Box$\\

\begin{rem}
The examples of blocking sets in Theorem \ref{minimal} (a) and (b)
are particular examples of the following general families.
Example (a) of Theorem \ref{minimal} belongs to a family of
minimal blocking sets of $PG(3,q)$ of size $2q+1$ constructed in
\cite[Examples 1]{Ta} (see also \cite[p. 171 Examples (d)]{He}).
Also, any example of this family, when $q=p$ is prime, provides a
$\frak{C}_{2p+1}$-cover of $(C_p)^4$. In the same paper other
families of blocking sets of $PG(3,q)$ are presented that can be
used to obtain other examples of covers of $(C_p)^4$. Whereas in
\cite[Corollary 3.6 (b)]{He} are described examples of minimal
blocking sets generating the whole space in $PG(d,p)$ for any $p$
prime and any $d\geq 3$.  Example (b) of Theorem \ref{minimal}
seems to belong to a family of minimal blocking sets of $PG(3,q)$
($q$ odd) of size $2q+2$ constructed in \cite[Theorem 3.1 and
Remarks (c)]{He}. Also, any example of this family, when $q=p$ is
an odd prime, produces a $\frak{C}_{2p+2}$-cover of $(C_p)^4$.
\end{rem}

%-------------------------------------------------------------------------------
\begin{lem}\label{5-group} Let $G$ be a $5$-group. Then
$G$ is a $\mathfrak{C}_9$-group if and only if  $G\cong (C_5)^3$.
\end{lem}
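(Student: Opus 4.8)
The plan is to mirror the treatment of the $3$-group cases in Lemmas~\ref{81} and~\ref{cover81}. Suppose $G$ is a $5$-group carrying a $\mathfrak{C}_9$-cover $\{M_i\mid i\in[9]\}$. By Proposition~\ref{p12} it must be elementary abelian, say $G\cong(C_5)^{m+1}$, with $PG(m,5)$ carrying a spanning minimal blocking set of size $9$; so the whole task is to show $m=2$. First I would extract the bound $m\le4$ from Lemma~\ref{p}(b): here $p=5=9-4$, so $s=4$ and every five of the $M_i$ meet trivially, forcing $|G|\le5^5$. The value $m=1$ is excluded immediately, since $(C_5)^2$ has only six maximal subgroups and cannot support a cover of size $9$. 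Thus it remains to eliminate $m=3$ and $m=4$ and to exhibit a cover when $m=2$.

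For $m=3$, i.e.\ $G\cong(C_5)^4$, I would simply invoke Remark~\ref{remii}: a minimal blocking set of $PG(3,5)$ of size at most $3(5+1)/2+1=10$ lies in a plane, so a size-$9$ blocking set cannot satisfy $d(B)=3$. The substantial case is $m=4$, i.e.\ $G\cong(C_5)^5$, which I intend to rule out by an inclusion--exclusion count in the exact manner of Lemma~\ref{cover81}. Here $|G|=5^5$ and $|M_i|=5^4$; Lemma~\ref{lem2.5}(b) gives $|M_i\cap M_j|=5^3$ for every pair, Lemma~\ref{p}(b) gives $\bigcap_{i\in S}M_i=1$ once $|S|\ge5$, and feeding the latter into Lemma~\ref{lem2.5}(d) pins down $\bigl|\bigcap_{i\in T}M_i\bigr|=5$ for every $T\in[9]^4$.

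The main obstacle is the level $|T|=3$, where a priori $\bigl|\bigcap_{i\in T}M_i\bigr|\in\{5^2,5^3\}$ and the value $5^3$ must be ruled out. I would argue by contradiction as in Lemma~\ref{cover81}: if $\bigl|\bigcap_{i\in L}M_i\bigr|=5^3$ for some $L\in[9]^3$, then this group already coincides with $\bigcap_{i\in L''}M_i$ for each two-element $L''\subset L$ (both have order $5^3$); picking $L'\in[9]^2$ disjoint from $L$ and using $\bigcap_{i\in L\cup L'}M_i=1$ forces $\bigcap_{i\in L''\cup L'}M_i=1$ with $|L''\cup L'|=4$, whence $|G|\le5^4$, a contradiction. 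Once the intersection cardinality is thus shown to be constant on every level, namely $5^4,5^3,5^2,5,1,\dots,1$, the inclusion--exclusion expansion of $\bigl|\bigcup_{i=1}^{9}M_i\bigr|$ collapses to a single integer; a short numerical check shows this integer differs from $5^5$, so no $\mathfrak{C}_9$-cover of $(C_5)^5$ can exist. This final arithmetic is the only genuinely computational point, and it is routine.

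It then remains to run Proposition~\ref{p12} in the reverse direction for $m=2$. The projective triangle is a nontrivial minimal blocking set of size $3(5+1)/2=9$ in $PG(2,5)$, hence spanning with $d(B)=2$ and $|B|=9$; by Proposition~\ref{p12} this yields a $\mathfrak{C}_9$-cover of $(C_5)^3$. Combining the four cases shows that a $5$-group is a $\mathfrak{C}_9$-group exactly when it is isomorphic to $(C_5)^3$.
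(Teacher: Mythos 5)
Your proposal is correct and follows essentially the same route as the paper: Proposition~\ref{p12} to reduce to elementary abelian groups, Lemma~\ref{p}(b) for the bound $|G|\le 5^5$, the maximal-subgroup count to exclude $(C_5)^2$, Remark~\ref{remii} to exclude $(C_5)^4$, the same contradiction argument pinning down all intersection sizes followed by inclusion--exclusion (the count is $2665\ne 5^5$) to exclude $(C_5)^5$, and the projective triangle in $PG(2,5)$ for the converse. No gaps.
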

\begin{proof}  Suppose that $G$ is a 5-group.
 By Proposition \ref{p12}, $G$ is an elementary abelian 5-group.
By Lemma \ref{lem2.5}(b) $$|G:M_{i}\cap M_{j}|=25 \; \text{for
all distinct} \; i,j\in [9]. \eqno{(1)}$$  Now Lemma \ref{p}(b)
implies that  $$\text{for every } \;  T\subseteq [9] \; \text{such
that} \;  |T|\geq 5, \; |\bigcap_{i\in T}M_i|=1. \eqno{(2)}$$
Therefore $\left|G\right|\leq 5^5$. Also by Lemma \ref{lem2.5}(a)
$$|\bigcap_{i\in
W}M_i|\in\{1,5\} \;\;\text{for all}\;   W\in [9]^4. \eqno{(3)}$$
 Since an elementary abelian group of order 25 has only six
maximal subgroups, we have $|G|\geq 5^3$.

Assume that $|G|=5^3$ so that $G\cong (C_5)^3$. As the projective
triangle in $PG(2,5)$ is a minimal blocking set of size 9,
Proposition \ref{p12} implies that $(C_5)^3$ is a
$\mathfrak{C}_9$-group. In fact
   if $G=\left<a,b,c\right>$,
  then the   set
  \begin{align*}\big\{\left<a,b\right>,  \left< a,c\right>,  \left<b,c \right>,
   \left< b^3c,a \right>,   \left< a^3c,a^4b\right>,&\\  \left<a^2c,ab
  \right>,  \left< b^2c,a \right>, \left< a^3c,ab
  \right>,  \left< a^2c,a^4b\right>\big\}. \end{align*}
of maximal subgroups    forms a $\frak{C}_9$-cover for $G$.

 It follows from Remark \ref{remii} that $|G|\not=5^4$.

 Now if $|G|=5^5$, then since (2) holds,  Lemma \ref{lem2.5}(d) implies  that $$\text{for every
subset} \; W\in [9]^4, \;\; |\bigcap_{i\in W}M_i|=5. \eqno{(4)}$$
Since $|G|=5^5$, it follows from  $(1)$ that
$$|\bigcap_{i\in K} M_i|=5^3 \;\;\text{for all} \;\;  K\in [9]^2, \eqno{(5)}$$ and so
   $|\bigcap_{i\in K}M_i|\in \{25,125\}$ for every  $K \in [9]^3$.

We prove  that $$|\bigcap_{i\in K}M_i|=25  \;\;\text{for all} \;
 K\in [9]^3. \eqno{(6)}$$ Since otherwise there exists $L\in
[9]^3$ such that $|\bigcap_{i\in L}M_i|=125$. Let $L'\in [9]^2$
such that $L' \cap L=\varnothing$. Then $(5)$ and $(2)$ imply that
$$\bigcap_{i\in L''\cup L'}M_i=\bigcap_{i\in L\cup L'} M_i=1$$ for every
$L''\subset L$ of size $2$. This implies that $|G|\leq 5^4$, which
is a contradiction.

Now using $(1)$, $(2)$, $(4)$, $(5)$ and $(6)$,  it follows from
the inclusion-exclusion principle that $|G|=2665$, which is a
contradiction.
\end{proof}
\begin{lem}\label{C36}
The group $(C_3)^6$ is not a $\mathfrak{C}_9$-group.
\end{lem}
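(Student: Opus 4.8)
The plan is to assume, for a contradiction, that $(C_3)^6$ has a $\mathfrak{C}_9$-cover $\{M_1,\dots,M_9\}$ with $M_i=M_{b_i}$, and to pass via Proposition \ref{p12} to a minimal blocking set $B=\{b_1,\dots,b_9\}$ of size $9$ in $PG(5,3)$ with $d(B)=5$. For $S\subseteq[9]$ write $r_S$ for the rank of $\{b_i:i\in S\}$ in $\mathbb{F}_3^6$; then $|\bigcap_{i\in S}M_i|=3^{6-r_S}$, so the whole statement becomes a statement about the rank function of the nine points, and the proof will compute $|\bigcup_i M_i|$ by inclusion--exclusion and show it cannot equal $3^6$. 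First I would pin down the extreme levels: by Lemma \ref{lem2.5}(b) one has $r_S=2$ whenever $|S|=2$, and by Lemma \ref{p}(b) (with $s=n-p=6$) one has $r_S=6$, i.e. $\bigcap_{i\in S}M_i=1$, whenever $|S|\ge 7$.

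The key step is to force levels $3$ and $4$ to be generic as well. Suppose some triple had $r_S=2$, i.e. three points collinear on a line spanning a $2$-dimensional subspace $W$. Adjoining any four of the remaining six points gives a $7$-set of rank $6$ by Lemma \ref{p}(b), so those four points are independent modulo $W$; hence the six remaining points would project to six vectors of $\mathbb{F}_3^6/W\cong\mathbb{F}_3^4$ every four of which are independent. Dually this is a $[6,2,5]$ MDS code over $\mathbb{F}_3$, i.e. six distinct points of $PG(1,3)$, which is impossible since $|PG(1,3)|=4$. Thus $r_S=3$ for all $|S|=3$. The identical argument one dimension higher (a coplanar $4$-set would force five distinct points of $PG(1,3)$) forces $r_S=4$ for all $|S|=4$.

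Only levels $5$ and $6$ remain, and they are tightly constrained: $r_S\in\{4,5\}$ for $|S|=5$, and $r_S\in\{5,6\}$ for $|S|=6$ by Lemma \ref{lem2.5}(a). Let $p$ be the number of $5$-subsets with $r_S=4$ and $r$ the number of $6$-subsets with $r_S=5$. Since lowering a level-$5$ rank changes its term by $+6$ and lowering a level-$6$ rank by $-2$, the inclusion--exclusion count, which equals $727$ when all ranks are generic, becomes $727+6p-2r$, so a cover forces $3p-r=1$. I would then translate this into the hyperplane-intersection numbers $a_j$ (the number of hyperplanes of $PG(5,3)$ meeting $B$ in exactly $j$ points). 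Using that every $3$-subset lies in exactly $13$ hyperplanes and every $4$-subset in exactly $4$ (consequences of the forced genericity), together with $a_0=0$ (blocking) and $a_j=0$ for $j\ge7$ (Lemma \ref{p}(b)), the moment equations $\sum_j\binom{j}{m}a_j$ for $m=0,\dots,6$ solve to $a_1=104-r$, $a_2=5r-82$, $a_3=346-10r$, $a_4=10r-131$, $a_5=127-5r$, $a_6=r$, with $p=(r+1)/3$. Non-negativity of the $a_j$ then squeezes $r$ into $17\le r\le 25$ with $r\equiv 2\pmod 3$, i.e. $r\in\{17,20,23\}$.

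The main obstacle is the final elimination of these surviving values, since (as the moment system above is automatically consistent) it cannot be done by counting alone and must use the realizability of the configuration over $\mathbb{F}_3$ together with the full force of minimality (Proposition \ref{p4}(b)). Here I expect to exploit that a rank-$4$ quintet is a $5$-arc of $PG(3,3)$ whose four outside points surject onto $PG(1,3)$, which forces each of its four $6$-supersets to be a rank-$5$ hexad, and that rank-$5$ hexads correspond bijectively to hyperplanes meeting $B$ in exactly six points; pushing this incidence bookkeeping, or --- exactly as in the proof of Lemma \ref{C35}, after reducing by Theorem \ref{thm1} to the three rows of the matrix $L$ --- a direct finite verification in {\sf GAP}, should show that none of $r\in\{17,20,23\}$ is realized by a genuine minimal blocking set, which completes the contradiction and proves that $(C_3)^6$ is not a $\mathfrak{C}_9$-group.
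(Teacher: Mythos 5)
Your reduction is correct as far as it goes, and it is genuinely different from (and more informative than) what the paper does: the arc/MDS argument forcing every triple to have rank $3$ and every quadruple rank $4$, the inclusion--exclusion identity $727+6p-2r=3^6$ giving $3p-r=1$, and the moment equations yielding $a_1=104-r,\dots,a_6=r$ and hence $r\in\{17,20,23\}$ all check out (I verified the arithmetic). The paper, by contrast, proves nothing of this structure: it reduces via Theorem \ref{thm1} to a blocking matrix consisting of $I_6$ plus three extra rows, observes that one extra row must have zero entry sum, lists the six such rows up to column permutation, and disposes of each case by the same {\sf GAP} search used for Lemma \ref{C35}. So your approach buys real geometric insight that the paper's brute-force check does not.

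However, as written your proposal has a genuine gap: the three surviving cases $r\in\{17,20,23\}$ are never eliminated. You acknowledge yourself that the moment system is consistent for these values, so no further counting of the type you have set up can finish the argument, and the concluding sentence only asserts that ``pushing this incidence bookkeeping \dots\ or a direct finite verification in {\sf GAP} \dots\ should show'' the nonexistence. That deferred step is not a detail --- it is the entire content of the lemma, since everything before it is compatible with existence. Your proposed fallback is in fact exactly the paper's proof (reduce by Theorem \ref{thm1} and run the finite check), so either you must actually carry out that computation, or you must supply the missing incidence argument ruling out $r=17,20,23$; until one of these is done the lemma is not proved. If you do intend the counting route, note also that the constraints you list for the endgame (each rank-$4$ quintet lies in four rank-$5$ hexads, rank-$5$ hexads biject with $6$-secant hyperplanes) only give inequalities such as $4p\le 6r$, which are vacuous in the relevant range, so a substantially new idea would still be needed there.
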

\begin{proof}
By a similar argument as in Lemma \ref{C35}, one may prove the
lemma. Note that here  the vectors in $(\mathbb{F}_3)^6$ whose sum
of its entries is zero, up to column permutations, are the
following:
$$
[1,1,1,1,1,1],[-1,1,-1,1,-1,1],[0,-1,1,1,1,1],[0,0,1,-1,1,-1],
[0,0,0,1,1,1],[0,0,0,0,1,-1].\eqno{(*)} $$
  Therefore by a similar argument as in Lemma
\ref{C35}, we complete the proof by proving that there is no
minimal blocking set of size 9 for $PG(5,3)$ containing the
standard basis vectors $\mathbf{e}_1,\dots,\mathbf{e}_6$ and one
of the above vectors in $(*)$.
\end{proof}
\begin{lem}\label{3-group} Let $G$ be a $3$-group. Then
$G$ is a $\mathfrak{C}_9$-group, if and only if  $G\cong  (C_3)^5
$.
\end{lem}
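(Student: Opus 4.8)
The plan is to mimic the proofs of Lemmas \ref{81} and \ref{cover81}: reduce to the elementary abelian case, bound the rank, dispatch all but one rank by blocking-set estimates already in hand, and settle the two surviving ranks by an explicit cover and an inclusion--exclusion count. First I would invoke Proposition \ref{p12} to force $G\cong (C_3)^d$, so that a $\mathfrak{C}_9$-cover $\{M_i\mid i\in[9]\}$ corresponds to a minimal blocking set $B$ of size $9$ in $PG(d-1,3)$ with $d(B)=d-1$. Since $(C_3)^d$ has fewer than $9$ maximal subgroups when $d\le 2$, and since Lemma \ref{p}(b) applied with $p=3$, $n=9$, $s=6$ gives $\bigcap_{i\in S}M_i=1$ whenever $|S|\ge 7$ and hence $|G|\le 3^7$, the rank is confined to $3\le d\le 7$. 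Rank $d=3$ is excluded by Remark \ref{remi} (a minimal blocking set of $PG(2,3)$ has at most $3\sqrt{3}+1<9$ points), rank $d=4$ by Remark \ref{remiii}, and rank $d=6$ by Lemma \ref{C36}. This leaves only $d=5$ and $d=7$.

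For $d=5$ I would prove the ``if'' direction by exhibiting an explicit $\mathfrak{C}_9$-cover of $(C_3)^5$. By Theorem \ref{thm1} such a blocking set reduces to the identity $I_5$ stacked over a $4\times 5$ matrix $L$, so the construction amounts to choosing four extra vectors $v_1,\dots,v_4\in(\mathbb{F}_3)^5$ and checking that the nine hyperplanes $M_{e_1},\dots,M_{e_5},M_{v_1},\dots,M_{v_4}$ cover the group and that the cover is irredundant (property (b) of Proposition \ref{p4}). I would verify this directly, for instance with {\sf GAP} as in Lemmas \ref{81} and \ref{cover81}; the same nine vectors simultaneously furnish the minimal blocking set claimed in Theorem \ref{minimal}(c).

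For the remaining rank $d=7$ I would derive a contradiction by pinning down every intersection size and then applying inclusion--exclusion, exactly as in the closing paragraph of Lemma \ref{cover81}. The initial data are $|\bigcap_{i\in T}M_i|=3^5$ for $|T|=2$ (Lemma \ref{lem2.5}(b)) and $\bigcap_{i\in T}M_i=1$ for $|T|\ge 7$ (Lemma \ref{p}(b)); moreover the hypothesis of Lemma \ref{lem2.5}(d) holds with $d=7$, so that part forces $|\bigcap_{i\in T}M_i|=3$ for all $|T|=6$. The key technical step is the usual ``no oversized intersection'' argument run downward for $|T|=3,4,5$: if some subset $L$ had intersection strictly larger than the generic value $3^{7-|T|}$, then it would coincide with the intersection over a one-smaller subset $L''\subset L$, and intersecting with a disjoint companion set $L'$ chosen so that $|L\cup L'|=7$ would produce a $6$-element set $L''\cup L'$ with trivial intersection, contradicting the value $3$ just established. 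Carrying this out forces $|\bigcap_{i\in T}M_i|=3^{7-|T|}$ for all $1\le|T|\le 6$, whereupon inclusion--exclusion gives $|\bigcup_{i=1}^{9}M_i|=2125\ne 3^7$, the desired contradiction; hence $d\ne 7$ and $G\cong(C_3)^5$.

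The main obstacle is twofold. The genuinely constructive part is producing the explicit cover of $(C_3)^5$, which I expect to require an ad hoc choice of the four extra vectors verified by machine rather than a uniform family like the projective triangle used for $(C_5)^3$ in Lemma \ref{5-group}. The main technical grind is the rank-$7$ bookkeeping: one must rule out oversized intersections at every intermediate size before the inclusion--exclusion count can be invoked, and the downward induction must be ordered correctly (each level using the generic value freshly proved at the level above).
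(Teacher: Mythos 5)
Your proposal is correct and follows essentially the same route as the paper's proof: reduce to $(C_3)^d$ via Proposition \ref{p12}, confine $d$ to $\{3,\dots,7\}$, eliminate $d=3,4$ by Remarks \ref{remi} and \ref{remiii} and $d=6$ by Lemma \ref{C36}, exhibit an explicit (machine-verified) $\mathfrak{C}_9$-cover for $d=5$, and kill $d=7$ by forcing all intersection sizes to their generic values and computing $|\bigcup M_i|=2125\neq 3^7$ by inclusion--exclusion. If anything, you are slightly more explicit than the paper in citing Lemma \ref{C36} to dispose of the $d=6$ case, which the paper's written argument leaves implicit.
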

\begin{proof} Suppose that $G$ is a 3-group having a
$\mathfrak{C}_9$-cover.
 Therefore  by Lemma \ref{lem2.5} $$|G:M_i|=3 \; \text{and}\; |G:M_i\cap M_j|=9 \; \text{for
all distinct} \; i,j\in [9]. \eqno{(i)}$$
 By Proposition \ref{p12}, $G$ is an elementary abelian  3-group. By Lemma \ref{p}(b) we have
 $$\text{for all} \; T \subseteq [9] \; \text{such that} \; |T|\geq
 7, \; |\bigcap_{i\in T}M_i|=1. \eqno{(ii)}$$
 Therefore  $|G |=3^d$, where $d\leq 7$.
  Since  an elementary abelian group of order $9$ has only four
maximal  subgroups, $|G|\geq 27$ and it follows from Remarks
\ref{remi} and \ref{remiii} that $|G|\not\in\{27,81\}$.  Therefore
$5\leq d\leq  7$.

 Assume that $|G|=3^5$ so that  $G\cong (C_3)^5$.
Now  it is easy to see (e.g. by {\sf GAP} \cite{GAP} )  that  if
$G=\left<a,b,c,d,e\right>$,
  then the  set
  \begin{align*}
 \mathcal{F}=\big\{ \left< a,b,c,d\right>,  \left< a,b,c,e\right>,  \left<a,b,d,e \right>, \left<a,c,d,e\right>,
 \left<b,c,d,e\right>,&\\
    \left<a^{-1}c,b,d,e\right>,  \left< b^{-1}c,a,d,e\right>,
      \left< de,c,b,a \right>,\left<a^{-1}e,a^{-1}d,c,ab \right>\big\}
      \end{align*}
    of maximal subgroups forms a $\mathfrak{C}_9$-cover for $G$.

Now assume that  $|G|=3^7$. Then it follows from $(i)$ that  for
every  $K \in [9]^3$,  we have $|\bigcap_{i\in K}M_i|= 3^4$ or
$3^5$. We now  prove  that $$|\bigcap_{i\in K}M_i|=3^4
\;\;\text{for all} \;\; K\in [9]^3. \eqno{(*)}$$ Suppose, for a
contradiction, that there exists $L\in [9]^3$ such that
$|\bigcap_{i\in L}M_i|=3^5$. Let $L'\in [9]^4$ such that $L' \cap
L=\varnothing$. Then it follows from $(i)$ and $(ii)$ that
$|\bigcap_{i\in L'' \cup L'}M_i|=1$ for every $L'' \subset L$ of
size $2$. This implies that $|G|\leq
3^6$, a contradiction. \\

Now since $|G|=3^7$, it follows from $(*)$ that  for every  $V\in
[9]^4$,  we have $|\bigcap_{i\in V}M_i|= 3^4$ or $3^3$. By a
similar argument as in the previous paragraph one can prove that
for all  $V\in [9]^4$, all $W\in [9]^5$ and all $T\in [9]^6$
  $$|\bigcap_{i\in V}M_i|=27, \; \;\;\; \; |\bigcap_{i\in
  V}M_i|=9 \;\;\; \text{and} \;\;\; |\bigcap_{i\in T}M_i|=3
. \eqno{(**)}$$  Now using $(i)-(ii)$ and $(*)-(**)$, it follows
from the inclusion-exclusion principle that
   $|G|=2125$, which is a contradiction.
\end{proof}

\noindent {\bf Proof of Theorem \ref{p-group}.} Let $G$ be a
$p$-group having a $\mathfrak{C}_9$-cover $\{M_i \;|\;
i=1,\dots,9\}$. By Proposition \ref{p12},  $G$ is an elementary
abelian $p$-group. By Theorem \ref{paking}, $p\leq 5$. Now  Lemmas
\ref{5-group},  \ref{3-group}  and Theorem \ref{application}
complete the proof. \hfill $\Box$\\

\noindent{\bf Proof of Theorem \ref{minimal}(c).} Consider the
$\mathfrak{C}_9$-cover $\mathcal{F}$ for $(C_3)^5$ obtained  in
Lemma \ref{3-group} and  the set
\begin{align*} B=\{(1,0,0,0,0), (0,1,0,0,0), (0,0,1,0,0), (0,0,0,1,0),&\\
(0,0,0,0,1),(1,0,1,0,0), (0,1,1,0,0), (0,0,0,1,-1),
(0,1,0,-1,-1)\}.
\end{align*}
\\ It is easy to check
 (e.g. by {\sf GAP} \cite{GAP}) that  $\mathcal{D}=\{M_b \;|\; b\in B\}$.
 Now Propositions \ref{b1} and \ref{p3} imply that $B$ is a  minimal blocking set of size 9 in
$PG(4,3)$. \hfill $\Box$\\

We end this paper by posing
\begin{prob}\label{prob}
Let $n$ be a positive integer. Find all pairs $(m,p)$ ($m>0$ an
integer and $p$ a prime number) for which there is a blocking set
$B$ of size $n$ in $PG(m,p)$ such that $d(B)=m$.
\end{prob}
The following table contains the complete answers for $n<10$ to
Problem \ref{prob}  which have been exerted  from the results of
the present paper together with some known ones:
\hspace{-13mm}$$\begin{tabular}{|c|c|c|c|c|c|c|c|}\hline
  % after \\: \hline or \cline{col1-col2} \cline{col3-col4} ...
 $n$ & 3 & 4 & 5 & 6 & 7 & 8 &9 \\ \hline
  $(m,p)$ & (1,2)& (1,3)& (3,2)& (1,5),(2,3)& (5,2), (3,3)& (1,7), (3,3)& (7,2),(2,5),(4,3) \\ \hline
  Reference & \cite{scorza} & \cite{greco1} & \cite{BFS97} & \cite{AAJM} & Theorem \ref{p-group7} & Theorem \ref{p-group8} & Theorem \ref{p-group} \\ \hline
\end{tabular}$$
\noindent{\bf Acknowledgments.} The authors are grateful to L.
Storme for his  valuable suggestions on some parts of this paper.
They are indebted to the referee for pointing out a serious
mistake in the original version of the paper as well as quoting
some useful results from the theory of blocking sets by which the
use of {\sf GAP} has been reduced.

\end{document}